\documentclass[4pt]{article} 

\usepackage[utf8]{inputenc} 
\usepackage{geometry} 
\geometry{a4paper} 

\usepackage{graphicx} 

\usepackage[colorlinks,citecolor=red]{hyperref}
\usepackage{amsmath}
\usepackage{amsthm}
\usepackage{amsfonts}
\usepackage{dsfont}
\usepackage{mathrsfs}
\usepackage{latexsym}
\usepackage{graphicx}
\usepackage{amssymb}
\usepackage{float}
\usepackage{epsfig}
\usepackage{epstopdf}
\usepackage{color,xcolor}
\usepackage{booktabs} 
\usepackage{array} 
\usepackage{paralist} 
\usepackage{verbatim} 
\usepackage{subfig} 

\newtheorem{theorem}{Theorem}[section]
\newtheorem{lemma}[theorem]{Lemma}

\newtheorem{proposition}[theorem]{Proposition}

\newtheorem{conjecture}[theorem]{Conjecture}


\usepackage{fancyhdr} 
\pagestyle{fancy} 
\lhead{}\chead{}\rhead{}
\lfoot{}\cfoot{\thepage}\rfoot{}


\usepackage{sectsty}
\allsectionsfont{\sffamily\mdseries\upshape} 

\usepackage[nottoc,notlof,notlot]{tocbibind} 
\usepackage[titles,subfigure]{tocloft} 




\title{{\bf Laplacian eigenvalue distribution, diameter and domination number of trees}}
\author{Jiaxin Guo, Jie Xue\thanks{Email:~guo\_jiaxin1@126.com(J. Guo),~xuejie@zzu.edu.cn(J. Xue),~rfliu@zzu.edu.cn(R. Liu).}, ~Ruifang Liu
\\
{\small School of Mathematics and Statistics, Zhengzhou University, 450001 Zhengzhou, China}
}
\date{} 

\begin{document}
\maketitle

\begin{abstract}
   For a graph $G$ with domination number $\gamma$, Hedetniemi, Jacobs and Trevisan [European Journal of Combinatorics 53 (2016) 66--71] proved that $m_{G}[0,1)\leq \gamma$,
   where $m_{G}[0,1)$ means the number of Laplacian eigenvalues of $G$ in the interval $[0,1)$.
  Let $T$ be a tree with diameter $d$. In this paper, we show that $m_{T}[0,1)\geq (d+1)/3$.
  However, such a lower bound is false for general graphs. 
  All trees achieving the lower bound are completely characterized. 
  Moreover, for a tree $T$, we establish a relation between the Laplacian eigenvalues, the diameter and the domination number 
  by showing that the domination number of $T$ is equal to $(d+1)/3$ if and only if it has exactly $(d+1)/3$ Laplacian eigenvalues less than one.
  As an application, it also provides a new type of trees, which show the sharpness of an inequality due to Hedetniemi, Jacobs and Trevisan.
\end{abstract}

\section{Introduction}

Given a graph $G$, the Laplacian matrix of $G$ is $L(G)=D(G)-A(G)$, where $D(G)$ and $A(G)$ are the diagonal degree matrix and adjacency matrix
of $G$. The eigenvalues of $L(G)$ are called the Laplacian eigenvalues of $G$. 
Clearly, $L(G)$ is positive semi-definite, and 0 is one of its eigenvalues with eigenvector $\mathtt{1}$.
Usually, the Laplacian eigenvalues of $G$ are written as $0=\mu_{1}(G)\leq \mu_{2}(G)\leq \cdots\leq \mu_{n-1}(G)\leq \mu_{n}(G)$.
It can be represented as a multiset
$${\it{SPEL}}(G)=\{\mu_{1}(G),\mu_{2}(G), \ldots,\mu_{n-1}(G),\mu_{n}(G)\}.$$
As we know, the Laplacian eigenvalue of $G$ is at most $n$.
Hence, for any $1\leq i\leq n$, the Laplacian eigenvalue $\mu_{i}(G)$ belongs to the interval $[0,n]$.
For an interval $I\subseteq [0,n]$, we use $m_{G}I$ to denote the number of the Laplacian eigenvalues which belong to the interval $I$.
Then $m_{G}[0,1)$ means the number of its Laplacian eigenvalues less than 1, that is,
$$m_{G}[0,1)=\#\{\mu\in {\it{SPEL}}(G): 0\leq \mu<1\}.$$

The Laplacian matrix is a classic matrix which can be used to represent a graph. 
In 1973, Fiedler presented some relations between the Laplacian eigenvalues and the structure of graphs \cite{Fiedler1973}.
Since then, a number of papers were devoted to studying the Laplacian eigenvalues of graphs. 
The readers can find comprehensive overviews of theoretical and practical issues on the topic in the book \cite{Molitierno2012} by Molitierno.

The distribution of Laplacian eigenvalues can give us useful information about the structure of the graph. 
Grone, Merris and Sunder \cite{Grone1990} proved that $m_{G}[0,1)\geq q(G)$, where $q(G)$ is the number of quasi-pendant vertices in $G$.
If $G$ is a graph on $n$ vertices with $n>2q(G)$, Merris \cite{Merris1991} showed that $m_{G}(2,n]\geq q(G)$. 
Guo and Tan \cite{Guo2001} improved this bound by showing that if $n>2\nu(G)$, then $m_{G}(2,n]\geq \nu(G)$, 
where $\nu(G)$ is the matching number of $G$. If the independence number of $G$ is $\alpha(G)$, 
Ahanjideh, Akbari, Fakharan and Trevisan \cite{Ahanjideh2022} obtained that $m_{G}[0,1)\leq \alpha(G)\leq m_{G}[0,n-\alpha(G)]$.
Wang, Yan, Fang, Geng and Tian \cite{Wang2020} presented that $m_{G}(n-1,n]\leq \kappa(G)$ and $m_{G}(n-1,n]\leq \chi(G)-1$, where $\kappa(G)$ and $\chi(G)$
are respectively the vertex-connectivity and the chromatic number of $G$.

A dominating set in a graph $G$ is a vertex set $S\subseteq V(G)$ such that any vertex in $V(G)\backslash S$ is adjacent to a vertex of $S$.
The domination number $\gamma(G)$ is the minimum cardinality of a dominating set of $G$. 
For a tree $T$ on $n$ vertices, Zhou, Zhou and Du \cite{Zhou2015} showed that $m_{T}[0,2)\leq n-\gamma(T)$.
In fact, this result can be extended to general graphs. 
Cardoso, Jacobs and Trevisan \cite{Cardoso2017} presented that $m_{G}[0,2)\leq n-\gamma(G)$ if $G$ is an isolate-free graph.
Recently, Hedetniemi, Jacobs and Trevisan \cite{Hedetniemi2016} established the relation between $m_{G}[0,1)$ and $\gamma(G)$.

\begin{theorem}[\cite{Hedetniemi2016}]\label{the_1}
If $G$ is a graph with domination number $\gamma$, then $m_{G}[0,1)\leq \gamma$.
\end{theorem}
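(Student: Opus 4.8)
The plan is to recast the statement in terms of inertia. Since every Laplacian eigenvalue is nonnegative, $m_{G}[0,1)$ equals the number of eigenvalues of $L(G)$ that are strictly below $1$, which is precisely the number of negative eigenvalues of $M := L(G)-I$. By the Courant--Fischer theorem (equivalently, by Sylvester's law of inertia), this number coincides with the largest dimension of a subspace $U\subseteq\mathbb{R}^{n}$ on which the quadratic form $q(x)=x^{\top}Mx=x^{\top}L(G)x-x^{\top}x$ is negative definite. So it suffices to bound the dimension of any such $U$ by $\gamma$.

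First I would fix a minimum dominating set $S\subseteq V(G)$ with $|S|=\gamma$, and let $W=\{x\in\mathbb{R}^{n}:x_{v}=0 \text{ for all } v\in S\}$ be the subspace of vectors supported on $V(G)\setminus S$; clearly $\dim W=n-\gamma$. The heart of the argument is the claim that $q$ is positive semidefinite on $W$, that is, $x^{\top}L(G)x\geq x^{\top}x$ for every $x\in W$. To prove it, write $x^{\top}L(G)x=\sum_{uv\in E(G)}(x_{u}-x_{v})^{2}$ and, using that $S$ dominates $G$, assign to each $u\notin S$ a fixed neighbour $f(u)\in S$. Since $x_{f(u)}=0$, the edge $uf(u)$ contributes $(x_{u}-x_{f(u)})^{2}=x_{u}^{2}$ to the edge sum.

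The combinatorial crux is that the edges $\{u,f(u)\}$ are pairwise distinct as $u$ ranges over $V(G)\setminus S$: each such edge has its unique endpoint outside $S$ equal to $u$, so two of them coincide only when the vertices coincide. Discarding the remaining (nonnegative) edge terms then yields $x^{\top}L(G)x\geq\sum_{u\notin S}(x_{u}-x_{f(u)})^{2}=\sum_{u\notin S}x_{u}^{2}=x^{\top}x$, which proves the claim.

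Finally I would combine the two halves. If $U$ is a subspace on which $q$ is negative definite, then $U\cap W=\{0\}$, since a common nonzero vector would force $q$ to be simultaneously negative and nonnegative. Hence $\dim U\leq n-\dim W=\gamma$, and taking $U$ of maximal dimension gives $m_{G}[0,1)=\dim U\leq\gamma$. I expect the only delicate points to be the bookkeeping between strict and non-strict inequalities in the inertia/Courant--Fischer reformulation, and the verification that the chosen dominating edges are genuinely distinct; both are handled by the observations above, and no case analysis on the structure of $G$ is needed.
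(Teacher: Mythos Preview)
Your argument is correct. The inertia reformulation is exactly right: the number of Laplacian eigenvalues in $[0,1)$ equals the number of negative eigenvalues of $L(G)-I$, and this equals the maximal dimension of a subspace on which the associated quadratic form is negative definite. Your key step---showing the form is positive semidefinite on the $(n-\gamma)$-dimensional coordinate subspace $W$ supported off a minimum dominating set, by assigning to each $u\notin S$ a dominating edge $\{u,f(u)\}$ and noting these edges are pairwise distinct---is clean and correct, and the dimension count $\dim U\le n-\dim W=\gamma$ then finishes the proof.

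As for comparison: note that the present paper does \emph{not} prove Theorem~\ref{the_1}; it simply quotes it from Hedetniemi, Jacobs and Trevisan~\cite{Hedetniemi2016} and uses it as a black box (in the proof of Theorem~\ref{the_4}). So there is no in-paper proof to match against. Your approach is in fact close in spirit to the original argument in~\cite{Hedetniemi2016}, which also exploits domination to produce, for each non-dominating vertex, a private edge to the dominating set and then bounds the quadratic form; the packaging via Sylvester's law/Courant--Fischer that you give is a particularly transparent way to organize that idea.
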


The above theorem gives an upper bound for the number of Laplacian eigenvalues less than 1. At the same time, it also provides a lower bound for domination number.
In \cite{Hedetniemi2016}, the authors gave a tree $T$ of order $65$ whose domination number $\gamma$ is strictly greater than $m_{T}[0,1)$.
On the other hand, it was also mentioned that the above bound is sharp. 
In order to provide further explanations for the sharpness, we will construct a new infinite family of trees, which satisfy the equality in Theorem \ref{the_1}.
To this end, we consider the relation between the Laplacian eigenvalues and the diameter of trees. 
The first result of the paper establishes a lower bound for the number of Laplacian eigenvalues of trees in the interval $[0,1)$.

\begin{theorem}\label{the_2}
   If $T$ is a tree with diameter $d$, then $m_{T}[0,1)\geq (d+1)/3$.
\end{theorem}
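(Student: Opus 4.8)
The plan is to prove the sharper statement $m_{T}[0,1)\geq \lceil (d+1)/3\rceil$ by producing an explicit subspace of dimension $\lceil (d+1)/3\rceil$ on which the symmetric matrix $L(T)-I$ is negative definite. Since the number of negative eigenvalues of a symmetric matrix equals the largest dimension of a subspace on which its quadratic form is negative, and since the eigenvalues of $L(T)-I$ below $0$ correspond exactly to the Laplacian eigenvalues of $T$ below $1$, such a subspace immediately yields $m_{T}[0,1)\geq \lceil (d+1)/3\rceil\geq (d+1)/3$.

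First I would fix a diametral path $P:\,v_{0}v_{1}\cdots v_{d}$, which I will call the \emph{spine}, and use the known spectrum of the bare path $P_{d+1}$. Its Laplacian eigenvalues are $2-2\cos\bigl(j\pi/(d+1)\bigr)$ for $j=0,\dots,d$, and $2-2\cos\bigl(j\pi/(d+1)\bigr)<1$ holds precisely when $j<(d+1)/3$; hence exactly $k:=\lceil (d+1)/3\rceil$ of them lie below $1$. Let $W\subseteq\mathbb{R}^{V(P)}$ be the direct sum of the eigenspaces of $L(P_{d+1})$ for eigenvalues strictly below $1$. Then $\dim W=k$ and $\phi^{\top}\bigl(L(P_{d+1})-I\bigr)\phi<0$ for every nonzero $\phi\in W$, because $L(P_{d+1})-I$ is $W$-invariant with all eigenvalues negative there.

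The heart of the argument is a folding (extension) map that transports this behaviour from the spine to all of $T$. Since $T$ is a tree, each vertex $u\notin V(P)$ lies on a unique path to the spine meeting it at a single vertex $r(u)$, so I define $\Phi:\mathbb{R}^{V(P)}\to\mathbb{R}^{V(T)}$ by $(\Phi\phi)_{v_{i}}=\phi_{v_{i}}$ and $(\Phi\phi)_{u}=\phi_{r(u)}$; that is, $\Phi\phi$ is constant on every branch hanging off the spine. The key computation is that $\Phi$ preserves the Laplacian edge form: every edge of $T$ not on the spine joins two vertices with the same $r$-value, hence contributes $0$, so $(\Phi\phi)^{\top}L(T)(\Phi\phi)=\sum_{i}(\phi_{v_{i}}-\phi_{v_{i+1}})^{2}=\phi^{\top}L(P_{d+1})\phi$. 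Meanwhile the extra branch coordinates only add nonnegative squares, so $\|\Phi\phi\|^{2}\geq\|\phi\|^{2}$. Combining these gives $(\Phi\phi)^{\top}\bigl(L(T)-I\bigr)(\Phi\phi)\leq \phi^{\top}\bigl(L(P_{d+1})-I\bigr)\phi$.

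Finally, $\Phi$ restricts to the identity on the spine, so it is injective and $U:=\Phi(W)$ has dimension $k$; for every nonzero $\phi\in W$ the displayed inequality together with the previous paragraph yields $(\Phi\phi)^{\top}\bigl(L(T)-I\bigr)(\Phi\phi)<0$. Thus $L(T)-I$ is negative definite on a $k$-dimensional subspace, which completes the proof. The only nonroutine point—and the step I expect to be the main obstacle—is setting up the folding map correctly and verifying the two facts that make it work: that every off-spine edge connects equal values (so the quadratic form is left exactly equal to that of the bare path) while the norm can only grow. Everything else (the path spectrum, injectivity, and the inertia interpretation of a negative-definite subspace) is standard.
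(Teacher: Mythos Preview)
Your argument is correct. The folding map $\Phi$ is well-defined (trees have unique paths), it preserves the Laplacian quadratic form because every off-spine edge $uv$ satisfies $r(u)=r(v)$, and the norm inequality $\|\Phi\phi\|^{2}\geq\|\phi\|^{2}$ is immediate. Courant--Fischer then yields $\mu_{k}(T)\leq \max_{0\neq x\in U}\frac{x^{\top}L(T)x}{x^{\top}x}<1$, so $m_{T}[0,1)\geq k=\lceil(d+1)/3\rceil$.

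The paper takes a different route. Rather than building explicit test vectors, it proves an auxiliary monotonicity lemma: if $G'$ is obtained from $G$ by deleting a pendant vertex and $m_{G'}[0,1)\geq k$, then $m_{G}[0,1)\geq k$. This is established via the edge-interlacing inequalities $\mu_{i}(G^{*})\leq\mu_{i}(G)\leq\mu_{i+1}(G^{*})$ where $G^{*}=G'\cup\{v\}$. One then strips pendant vertices from $T$ down to the diametral path $P_{d+1}$ and invokes the path spectrum for the base case. Your approach is more direct and self-contained: it bypasses interlacing entirely, constructs the good subspace in one shot, and delivers the integer bound $\lceil(d+1)/3\rceil$ immediately (the paper records this sharpening separately as a corollary). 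The paper's approach, on the other hand, isolates a reusable pendant-deletion lemma that it exploits again later when characterizing the extremal trees, so its modularity pays off elsewhere in the paper.
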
 

According to Theorem \ref{the_2}, one can see that a tree will have many small Laplacian eigenvalues if its diameter is large.
For example, the diameter of a perfect binary tree of order $n$ is equal to $2(\log_{2}(n+1)-1)$, and it has at least $(2\log_{2}(n+1)-1)/3$ Laplacian eigenvalues less than one.

It is somewhat surprising that the lower bound in Theorem \ref{the_2} is also a lower bound for domination number of trees.
Numerous bounds for the domination number have been reported in the literature. 
One of the best-known lower bounds is that the domination number of a graph is at least $(d+1)/3$
(for this bound and other results on domination number, see the monograph \cite{Haynes1998} by Haynes, Hedetniemi and Slater).
Here we restate the bound for trees only.

\begin{theorem}[\cite{Haynes1998}]\label{the_3}
   If $T$ is a tree with diameter $d$ and domination number $\gamma$, then $\gamma\geq (d+1)/3$.
\end{theorem}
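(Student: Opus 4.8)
The plan is to prove the bound by a direct counting argument along a diametral path, using only the triangle inequality for the graph distance $d_T$. Since $T$ is a tree of diameter $d$, fix vertices $v_0, v_d$ with $d_T(v_0, v_d) = d$ and let $P : v_0 v_1 \cdots v_d$ be the unique path joining them. Because the tree path between any two vertices is unique, $P$ is a geodesic: $d_T(v_i, v_j) = |i-j|$ for all $0 \le i, j \le d$. In particular $P$ consists of exactly $d+1$ distinct vertices.

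Let $S$ be a minimum dominating set of $T$, so $|S| = \gamma$. Since $S$ dominates every vertex of $T$, each path vertex $v_i$ satisfies $d_T(s, v_i) \le 1$ for some $s \in S$. For each $s \in S$ define $A_s = \{\, v_i \in V(P) : d_T(s, v_i) \le 1 \,\}$, the set of path vertices dominated by $s$; then $\bigcup_{s \in S} A_s = V(P)$. The crux of the argument is to bound $|A_s|$: if $v_i, v_j \in A_s$, then by the triangle inequality $|i-j| = d_T(v_i, v_j) \le d_T(v_i, s) + d_T(s, v_j) \le 2$, so the indices of the vertices in $A_s$ lie in an interval of length at most $2$. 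Hence each $A_s$ contains at most three path vertices, i.e.\ $|A_s| \le 3$.

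Combining these observations gives
$$d + 1 = |V(P)| \le \sum_{s \in S} |A_s| \le 3|S| = 3\gamma,$$
which rearranges to $\gamma \ge (d+1)/3$, as claimed.

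I expect the only step carrying real content to be the estimate $|A_s| \le 3$; everything else is bookkeeping. Its proof rests on $P$ being a shortest path, which for a tree is automatic since the path between two vertices is unique, so the geodesic property costs nothing here. It is worth noting that the argument uses nothing about trees beyond the existence of a geodesic realizing the diameter, so taking $P$ to be any diametral geodesic yields the same bound $\gamma \ge (d+1)/3$ for every connected graph; the tree hypothesis simply makes the geodesic automatic.
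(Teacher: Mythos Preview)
Your proof is correct; the counting argument along a diametral path is the standard classical approach and, as you observe, extends verbatim to any connected graph. The paper itself does not prove Theorem~\ref{the_3} directly: it is quoted from \cite{Haynes1998} as a known result. That said, the paper's own contributions implicitly furnish an alternative proof for trees, since combining Theorem~\ref{the_1} ($m_G[0,1)\le\gamma$) with Theorem~\ref{the_2} ($m_T[0,1)\ge(d+1)/3$) immediately yields $\gamma\ge(d+1)/3$. Your argument is far more elementary and self-contained---it uses nothing beyond the triangle inequality---whereas the spectral route requires the interlacing machinery developed in the paper; on the other hand, the spectral approach is what lets the authors go further and characterize equality (Theorem~\ref{the_4}).
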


In \cite{Xue2020}, Xue, Liu, Yu and Shu established a spectral characterization when the domination number of a tree attains the lower bound. 
It was proved that a tree $T$ satisfying $\gamma=(d+1)/3$ if and only if it contains a Laplacian eigenvalue of multiplicity $n-d$.
In this paper, we give a new spectral characterization by using the number of Laplacian eigenvalues less than 1.
The experiments indicate that some trees achieve the lower bounds in Theorems \ref{the_2} and \ref{the_3} at the same time.
For example, the star $K_{1,n-1}$ has Laplacian eigenvalues $n, 1^{n-2}, 0$. It is obvious that the star $K_{1,n-1}$ satisfies 
$m_{K_{1,n-1}}[0,1)=\gamma=(d+1)/3=1$. In the general case, we obtain the following result.

\begin{theorem}\label{the_4}
   Let $T$ be a tree with diameter $d$ and domination number $\gamma$. Then $\gamma=(d+1)/3$ if and only if $m_T[0,1)=(d+1)/3$.
\end{theorem}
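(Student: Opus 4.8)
The plan is to sandwich the quantity $m_T[0,1)$ and to exploit that both equalities are equalities of integers. Combining Theorem~\ref{the_1} with Theorem~\ref{the_2} yields, for every tree $T$,
$$\frac{d+1}{3}\leq m_T[0,1)\leq \gamma,$$
while Theorem~\ref{the_3} gives the parallel bound $\gamma\geq (d+1)/3$. Because $m_T[0,1)$ and $\gamma$ are integers, either equality in the statement already forces $3\mid(d+1)$, so $(d+1)/3$ is a genuine integer throughout. The forward implication is then immediate: if $\gamma=(d+1)/3$, the displayed chain reads $(d+1)/3\leq m_T[0,1)\leq(d+1)/3$, and squeezing gives $m_T[0,1)=(d+1)/3$, with no structural input about $T$.

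The content is the converse. Assuming $m_T[0,1)=(d+1)/3$, the chain only returns the already-known bound $\gamma\geq(d+1)/3$, so everything hinges on proving the reverse inequality $\gamma\leq(d+1)/3$. I would extract this from a structural description of the trees that are extremal for Theorem~\ref{the_2}. Concretely, I would diagonalize $L(T)-I$ by the Jacobs--Trevisan procedure along a rooted diametral path $v_0v_1\cdots v_d$: by Sylvester's law of inertia the number of negative diagonal entries it outputs equals $m_T[0,1)$, and the lower bound of Theorem~\ref{the_2} arises because each of the $(d+1)/3$ consecutive triples of path vertices forces at least one such negative entry. The equality $m_T[0,1)=(d+1)/3$ then says that no vertex outside this minimal bookkeeping contributes an extra negative entry, which tightly restricts how pendant subtrees may hang off the path. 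From that restricted shape I would read off a dominating set of size $(d+1)/3$, choosing one designated vertex in each path-triple and verifying that the admissible pendant attachments are all dominated by these choices; this yields $\gamma\leq(d+1)/3$ and hence equality.

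A second route to the converse is through the spectral characterization of Xue, Liu, Yu and Shu: it would suffice to show that $m_T[0,1)=(d+1)/3$ forces $1$ to be a Laplacian eigenvalue of multiplicity exactly $n-d$ (leaving $(2d-1)/3$ non-unit eigenvalues, all above $1$), after which $\gamma=(d+1)/3$ follows at once from their theorem. Either way, the main obstacle is structural and is the same in both routes: Theorems~\ref{the_1}--\ref{the_3} only bound $\gamma$ from below and $m_T[0,1)$ from above, so none of them can supply the upper bound $\gamma\leq(d+1)/3$. That bound must be wrung out of the rigidity the minimal value $m_T[0,1)=(d+1)/3$ imposes on $T$, and the delicate part is checking that this rigidity persists under every pendant attachment the extremal characterization permits, while each such attachment remains dominated by a single size-$(d+1)/3$ set.
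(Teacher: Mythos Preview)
Your overall architecture matches the paper's exactly. The direction $\gamma=(d+1)/3\Rightarrow m_T[0,1)=(d+1)/3$ is handled in the paper precisely by the squeeze $(d+1)/3\le m_T[0,1)\le\gamma$ from Theorems~\ref{the_2} and~\ref{the_1}, just as you do. For the direction $m_T[0,1)=(d+1)/3\Rightarrow\gamma=(d+1)/3$ the paper likewise observes that the chain only yields $\gamma\ge(d+1)/3$ and that one must produce a dominating set of size $(d+1)/3$; it does this by first proving a separate structural result (Theorem~\ref{the_5}): $m_T[0,1)=(d+1)/3$ if and only if $T\in\Gamma(n,d)$, and then invoking property~(P2) of the family $\Gamma(n,d)$.

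Where you diverge is in the proposed mechanism for that structural step. The paper does \emph{not} run the Jacobs--Trevisan diagonalization and track negative signs along path-triples. Instead, assuming $T\notin\Gamma(n,d)$, it fixes a diametral path $P_{3k}$, isolates a small subtree $T'$ obtained by attaching one or two extra vertices to $P_{3k}$ at a ``wrong'' position, and then forces $\mu_{k+1}(T')<1$ in two ways: in Case~1 by an explicit computation of $\det(L(T')-I)$ via the tridiagonal determinants $|M_n|$ (showing $1\notin\mathrm{Spec}\,L(T')$, so the interlacing inequality $\mu_{k+1}(T')\le 1$ is strict), and in Case~2 by deleting a path edge so that $T'$ splits into two paths whose $[0,1)$-counts add to at least $k+2$, then using edge-interlacing. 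The conclusion $m_T[0,1)\ge k+1$ is then propagated back to $T$ by repeated pendant-vertex deletions (Lemma~\ref{lem_1}).

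So your plan is correct but remains a plan: the substantive work---actually pinning down which pendant attachments are admissible and why no others occur---is not carried out, and the Jacobs--Trevisan route you sketch would require its own nontrivial bookkeeping. If you want to align with the paper, replace the diagonalization idea by the determinant/interlacing argument above; if you prefer your own route, be aware that the delicate part you flag (checking every admissible attachment) is exactly what the paper's two cases resolve, and your write-up must do the same amount of work.
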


Let us denote $\Gamma(n,d)$ the set of all trees which have exactly $(d+1)/3$ Laplacian eigenvalues less than one. 
Such trees will be characterized in the following section. For any tree $T$ in $\Gamma(n,d)$, by Theorem \ref{the_4},
it is easy to see that $m_T[0,1)=(d+1)/3=\gamma$.
Then it also provides a type of trees satisfying the equality in Theorem \ref{the_1}.

The rest of the paper is organized as follows. In Section 2, we discuss the Laplacian eigenvalues less than 1, and give the proofs of Theorems \ref{the_2} and \ref{the_4}.
In Section 3, the lower bound in Theorem \ref{the_2} will be improved slightly.
Moreover, for a double starlike tree, the value of the number of Laplacian eigenvalues in $[0,1)$ is determined. 
In the light of experimental data and observations, some open problems are presented at the end of the paper.

\section{Laplacian eigenvalues less than 1}

Let us recall an interlacing result for symmetric matrices. Suppose that $A$ and $B$ are two symmetric matrices of order $n$.
Denote by $\lambda_{1}(A)\leq \lambda_{2}(A)\leq\cdots\leq \lambda_{n}(A)$ and $\lambda_{1}(B)\leq \lambda_{2}(B)\leq\cdots\leq \lambda_{n}(B)$
the eigenvalues of $A$ and $B$, respectively. If there is a vector $\mathtt{x}$ such that $A=B+\mathtt{x}\mathtt{x}^{t}$, then their eigenvalues satisfy 
\begin{eqnarray}\label{eq1}
   \lambda_{n}(B)\leq \lambda_{n}(A)\leq \lambda_{n-1}(B)\leq \cdots\leq\lambda_{2}(B)\leq \lambda_{2}(A)\leq \lambda_{1}(B)\leq\lambda_{1}(A). 
\end{eqnarray}
Let $G$ be a graph on $n$ vertices, and let $G'$ be a graph obtained from $G$ by adding an edge. 
Then the Laplacian matrices of $G$ and $G'$ satisfy the following equation:
\begin{eqnarray*}
   L(G')=L(G)+\begin{bmatrix}1\\-1\\0\\ \vdots\\0
   \end{bmatrix}\begin{bmatrix}1&-1&0& \cdots &0\end{bmatrix}.
\end{eqnarray*}
Using the interlacing result (\ref{eq1}), one can easily obtain the following property for the Laplacian eigenvalues of a graph when adding some edges.

\begin{proposition}[\cite{Cvetkovic1980}]\label{pro_1}
   Let $G$ be a graph on $n$ vertices, and let $G'$ be a graph obtained from $G$ by adding an edge. Then
   \begin{eqnarray}\label{eq2}
      0=\mu_1(G)=\mu_1(G')\leq \mu_2(G) \leq \mu_2(G')\leq\cdots\leq\mu_n(G)\leq \mu_n(G') .
   \end{eqnarray}
\end{proposition}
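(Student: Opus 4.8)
The plan is to recognize the statement as a direct instance of the rank-one interlacing principle (\ref{eq1}), so that no genuinely new machinery is required beyond careful bookkeeping. First I would fix the edge $e$ that is added to $G$ in order to form $G'$, say $e=uv$, and relabel the vertices so that $u$ and $v$ become the first two vertices. Since relabeling the vertices of a graph amounts to conjugating both $L(G)$ and $L(G')$ by the same permutation matrix, it preserves every Laplacian eigenvalue, so this reduction is without loss of generality. With this labeling one has the exact identity
\begin{eqnarray*}
   L(G') = L(G) + \mathtt{x}\mathtt{x}^{t}, \qquad \mathtt{x} = (1,-1,0,\ldots,0)^{t},
\end{eqnarray*}
which is precisely the matrix factorization displayed just before the proposition; it is immediate from $L=D-A$, since adding the edge raises the degrees of vertices $1$ and $2$ by one and changes the two symmetric off-diagonal entries from $0$ to $-1$.

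Next I would invoke the interlacing result (\ref{eq1}) with $B=L(G)$ and $A=L(G')$, whose eigenvalues are $\mu_{i}(G)$ and $\mu_{i}(G')$ respectively, noting that $\mathtt{x}\mathtt{x}^{t}$ is positive semidefinite of rank one so the hypothesis of that result is met. Reading off the resulting chain separates into two families of inequalities: the same-index comparisons $\mu_{i}(G)\leq \mu_{i}(G')$ for every $i$ (reflecting that the correction term only pushes eigenvalues up), and the shifted comparisons $\mu_{i}(G')\leq \mu_{i+1}(G)$ for $i=1,\ldots,n-1$ relating consecutive indices of the two spectra. Interleaving these two families yields exactly the alternating chain
\begin{eqnarray*}
   \mu_1(G)\leq \mu_1(G')\leq \mu_2(G)\leq \mu_2(G')\leq \cdots \leq \mu_n(G)\leq \mu_n(G')
\end{eqnarray*}
asserted in (\ref{eq2}).

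Finally I would pin down the left end of the chain so as to recover the claimed equality there. Both $L(G)$ and $L(G')$ are positive semidefinite and annihilate the all-ones vector $\mathtt{1}$, so their smallest eigenvalues both equal zero; hence $\mu_1(G)=\mu_1(G')=0$, which upgrades the first inequality of the chain to the stated equality and completes the verification of (\ref{eq2}).

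I do not expect a real obstacle here: the entire content of the proposition is the rank-one identity for $L(G')-L(G)$ together with the already-cited interlacing inequality (\ref{eq1}). The only point that requires attention is the index alignment, namely matching the indices appearing in (\ref{eq1}) to the consecutive triples $\mu_{i}(G),\mu_{i}(G'),\mu_{i+1}(G)$ in (\ref{eq2}), together with the routine confirmation that the added-edge correction genuinely has the rank-one form $\mathtt{x}\mathtt{x}^{t}$ for the indicated $\mathtt{x}$.
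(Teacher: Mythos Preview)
Your proposal is correct and follows essentially the same route as the paper: the paper displays the rank-one identity $L(G')=L(G)+\mathtt{x}\mathtt{x}^{t}$ with $\mathtt{x}=(1,-1,0,\ldots,0)^{t}$ and then invokes the interlacing result (\ref{eq1}) directly, which is exactly what you do (with the added care of justifying the relabeling and the equality $\mu_1(G)=\mu_1(G')=0$).
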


Proposition \ref{pro_1} states that the edge addition operation do not decrease Laplacian eigenvalues.
If $H$ is a spanning subgraph of a graph $G$, then it is easy to see that $m_{G}[0,1)\leq m_{H}[0,1)$.
Furthermore, using Proposition \ref{pro_1}, we establish the change for the number of Laplacian eigenvalues in $[0,1)$ when deleting pendant vertices.

\begin{lemma}\label{lem_1}
   Let $G$ be a graph on $n$ vertices, and let $G'$ be a graph obtained from $G$ by deleting a pendant vertex.
   If $m_{G'}[0,1)\geq k$, then $m_{G}[0,1)\geq k$.
\end{lemma}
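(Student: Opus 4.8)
The plan is to realize $G$ as a single edge added to an auxiliary graph whose small-eigenvalue count I can read off directly, and then invoke the edge-addition interlacing of Proposition~\ref{pro_1}. Let $v$ be the pendant vertex of $G$ that gets deleted, with unique neighbour $u$, so that $G'=G-v$ has $n-1$ vertices. First I would introduce the graph $G''$ on $n$ vertices obtained from $G'$ by adjoining $v$ as an \emph{isolated} vertex. An isolated vertex contributes exactly one additional Laplacian eigenvalue equal to $0$, so ${\it SPEL}(G'')={\it SPEL}(G')\cup\{0\}$ as multisets. Since $0\in[0,1)$, this gives $m_{G''}[0,1)=m_{G'}[0,1)+1\geq k+1$, which in terms of the nondecreasing ordering means $\mu_{k+1}(G'')<1$.

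The key structural observation is that $G$ is precisely $G''$ with the single edge $uv$ added back, because in $G$ the vertex $v$ is adjacent only to $u$. Thus $G''$ and $G$ are two graphs on the same $n$ vertices differing by exactly one edge, with $G$ being the edge-augmented one. Applying Proposition~\ref{pro_1} with $G''$ as the smaller graph and $G$ as the graph obtained by adding an edge yields the interlacing $\mu_i(G'')\leq \mu_i(G)\leq \mu_{i+1}(G'')$ for each admissible index $i$; in particular $\mu_k(G)\leq \mu_{k+1}(G'')$ (note $k\leq n-1$ since $G'$ has only $n-1$ eigenvalues, so this index is in range).

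To finish, I would simply chain the two facts: $\mu_k(G)\leq \mu_{k+1}(G'')<1$, so at least the $k$ smallest Laplacian eigenvalues of $G$ lie in $[0,1)$, that is, $m_G[0,1)\geq k$, as claimed.

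The one point that requires genuine care — and the place where an argument of this shape can silently go wrong — is the bookkeeping of indices. The extra zero eigenvalue introduced by the isolated vertex is exactly what upgrades the hypothesis from ``$\geq k$'' on $G'$ to ``$\geq k+1$'' on $G''$, and this single unit of slack is precisely what is needed to absorb the one-index shift $\mu_k(G)\leq\mu_{k+1}(G'')$ produced by adding an edge. Before relying on it, I would verify that the convention in Proposition~\ref{pro_1} does put the edge-augmented graph $G$ on the larger side of each inequality, so that the bound $\mu_k(G)\leq\mu_{k+1}(G'')$ (rather than the useless reverse inequality) is the one that holds.
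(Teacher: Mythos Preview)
Your proof is correct and follows essentially the same route as the paper's: your auxiliary graph $G''$ is exactly the paper's $G^{*}\cong G'\cup\{v\}$, and both arguments use the extra zero eigenvalue to gain one index of slack before applying the edge-addition interlacing of Proposition~\ref{pro_1} to conclude $\mu_k(G)\leq \mu_{k+1}(G'')<1$. The paper's write-up is slightly terser but the logic is identical.
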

\begin{proof}
Assume that $v$ is the deleted vertex, and $v$ is adjacent to the vertex $u\in V(G)$.
Then clearly $G'\cong G-v$. Consider the graph $G^{*}$ obtained from $G$ by deleting the pendant edge $uv$.
One can see that $G'$ is a subgraph of $G^{*}$, and $G^{*}$ is a subgraph of $G$.
Note also that $G^{*}$ can be treated as the union of $G'$ and an isolated vertex, that is, $G^{*}\cong G'\cup \{v\}$.
This implies that the Laplacian eigenvalues of $G^{*}$ consist of the Laplacian eigenvalues of $G'$ and an additional 0.
It follows that 
\begin{eqnarray}\label{eq3}
   \mu_{i}(G')=\mu_{i+1}(G^{*})
\end{eqnarray}
for $1\leq i\leq n-1$.
Since $m_{G'}[0,1)\geq k$, we have $\mu_{k}(G')<1$. According to (\ref{eq3}), it follows that 
\begin{eqnarray}\label{eq4}
   \mu_{k+1}(G^{*})=\mu_{k}(G')<1.
\end{eqnarray}
Since $G^{*}$ is obtained from $G$ by deleting an edge, by Proposition \ref{pro_1}, we obtain that
\begin{eqnarray}\label{eq5}
   \mu_{k+1}(G)\geq \mu_{k+1}(G^{*})\geq \mu_{k}(G).
\end{eqnarray}
Combining (\ref{eq4}) and (\ref{eq5}), we have $\mu_{k}(G)<1$. This implies that $m_{G}[0,1)\geq k$, as desired.
\end{proof}

As usual, $P_{n}$ denotes a path on $n$ vertices. Recall that the Laplacian eigenvalues of $P_{n}$ are: 
$$2-2\cos\frac{\pi i}{n}$$ where $i=0,1,\ldots,n-1$.
Now, we are ready to prove Theorem \ref{the_2}.

\begin{proof}[Proof of Theorem \ref{the_2}]
   If $T\cong P_{n}$, then its $k$-th smallest Laplacian eigenvalue is 
   $$\mu_{k}(T)=2-2\cos\frac{(k-1)\pi}{n}.$$
   Note that $\mu_{k}(T)<1$ if and only if $k-1<n/3$. This implies that $\mu_{k}(T)<1$ only if $k\leq \lceil n/3\rceil$.
   Then it follows that $m_{T}[0,1)=\lceil n/3\rceil\geq n/3=(d+1)/3$, as required. 

   Suppose now that $T\ncong P_{n}$, and $P_{d+1}$ is a diameter-path in the tree $T$. 
   Let $m$ be the number of vertices in $V(T)\backslash V(P)$. 
   It is easy to see that the path $P_{d+1}$ can be obtained from $T$ by successive deleting $m$ pendant vertices.
   This means that there are a series of trees
   $$T_{0}\cong T, T_{1},T_{2},\ldots,T_{m-1}, T_{m}\cong P_{d+1},$$
   such that, for $0\leq i\leq m-1$, $T_{i+1}$ is obtained from $T_{i}$ by deleting a pendant vertex.
   As mentioned above, we have $m_{P_{d+1}}[0,1)\geq (d+1)/3$. It follows from Lemma \ref{lem_1} that $m_{T_{m-1}}[0,1)\geq (d+1)/3$.
   By applying repeatedly Lemma \ref{lem_1}, we obtain that $m_{T}[0,1)\geq (d+1)/3$, which completes the proof.
\end{proof}

\begin{figure}[tbp]
   \centering
   \includegraphics[scale=0.47]{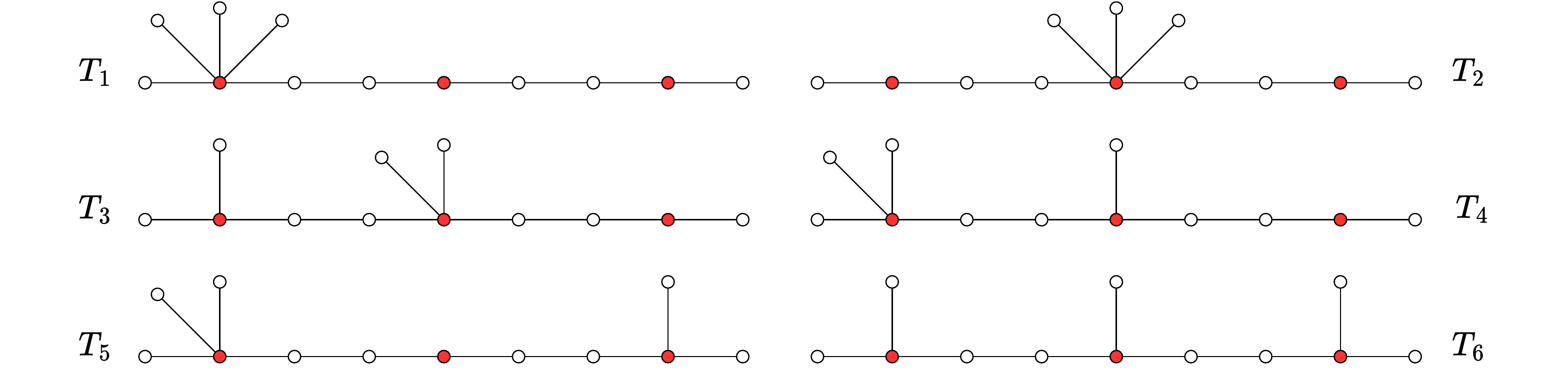}
   \caption{Trees in $\Gamma(12,8)$.}
   \label{fig_1}
   \end{figure}

In the following, we will characterize all trees which have exactly $(d+1)/3$ Laplacian eigenvalues less than one, that is, $m_{T}[0,1)=(d+1)/3$.

Let us define a set of trees with diameter $d\equiv 2~(\text{mod}~3)$. Suppose that $P_{d+1}=v_{1}v_{2}\cdots v_{d+1}$.
For $1\leq i\leq (d+1)/3$, we add $n_{i}\geq 0$ pendant vertices and join them to the vertex $v_{3i-1}$.
The resulting graph is obviously a tree, and denote it by $H_{d}(n_{1},n_{2},\ldots,n_{(d+1)/3})$.
Let 
$$\Gamma(n,d)=\left\{H_{d}(n_{1},n_{2},\ldots,n_{(d+1)/3}):\sum_{i=1}^{(d+1)/3}n_{i}=n-d-1~\text{and}~d\equiv 2~(\text{mod}~3)\right\}.$$
For any tree $T\in \Gamma(n,d)$, one can easy to check that $T$ satisfies the following properties:
\begin{itemize}
   \item[(P1)] its diameter is $d$;
   \item[(P2)] its domination number is $(d+1)/3$;
   \item[(P3)] it contains $n-d+1$ pendant vertices;
   \item[(P4)] the distance between any two pendant vertices is $2~(\text{mod}~3)$;
   \item[(P5)] 1 is a Laplacian eigenvalue of $T$ with multiplicity $n-d$ (see \cite[Theorem 2.12]{Xue2020}).
\end{itemize}
For example, in Figure \ref{fig_1}, we present all non-isomorphic trees in $\Gamma(12,8)$. 
The domination number of any tree in $\Gamma(12,8)$ is equal to 3, and the red vertices form a dominating set.
The Laplacian eigenvalues of these trees are established in Table \ref{tab_1}.
One can see that these trees have three Laplacian eigenvalues less than 1. 
If $d\equiv 2~(\text{mod}~3)$ and $n=d+1$, then $P_{d+1}$ is the only tree in $\Gamma(n,d)$.
Thus, we obtain that
$$\mu_{\frac{d+1}{3}}(P_{d+1})=2-2\cos\left(\frac{1}{3}-\frac{1}{d+1}\right)\pi<2-2\cos\frac{\pi}{3}=1,$$
and 
$$\mu_{\frac{d+1}{3}+1}(P_{d+1})=2-2\cos\frac{\pi}{3}=1,$$
which yields that $m_{P_{d+1}}[0,1)=(d+1)/3$.
In general, we show that any tree $T\in \Gamma(n,d)$ has exactly $(d+1)/3$ Laplacian eigenvalues less than one.

\begin{table}[tbp]   
     \begin{center}\caption{Laplacian eigenvalues of trees in $\Gamma(12,8)$.}\label{tab_1}
       \begin{tabular}{ccccccccccccc}
       \toprule
       & $\mu_{1}$ & $\mu_{2}$ & $\mu_{3}$ & $\mu_{4}$ & $\mu_{5}$ & $\mu_{6}$ & $\mu_{7}$ & $\mu_{8}$ & $\mu_{9}$ & $\mu_{10}$ & $\mu_{11}$ & $\mu_{12}$\\
       \midrule
       $T_{1}$& 6.055 & 3.814 & 3.301 & 2.572 & 1.760 & 1 & 1 & 1 & 1 & 0.414 & 0.084 & 0\\
       $T_{2}$& 6.107 & 3.532 & 3.438 & 2.347 & 2.195 & 1 & 1 & 1 & 1 & 0.260 & 0.121 & 0\\
       $T_{3}$& 5.187 & 4.172 & 3.464 & 2.600 & 2.200 & 1 & 1 & 1 & 1 & 0.274 & 0.102 & 0\\
       $T_{4}$& 5.103 & 4.335 & 3.420 & 2.641 & 2.094 & 1 & 1 & 1 & 1 & 0.316 & 0.091 & 0\\
       $T_{5}$& 5.098 & 4.233 & 3.582 & 2.773 & 1.847 & 1 & 1 & 1 & 1 & 0.388 & 0.078 & 0\\
       $T_{6}$& 4.461 & 4.199 & 4.000 & 2.714 & 2.239 & 1 & 1 & 1 & 1 & 0.300 & 0.088 & 0\\
   \bottomrule
       \end{tabular}
     \end{center}
   \end{table}

\begin{lemma}\label{lem_2}
If $T\in \Gamma(n,d)$, then $m_{T}[0,1)=(d+1)/3$.
\end{lemma}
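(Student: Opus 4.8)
The plan is to pin down $m_{T}[0,1)$ by squeezing it between the lower bound we already have and an upper bound coming from the domination number, both of which evaluate to $(d+1)/3$. Concretely, I would combine Theorem \ref{the_2} (supplying the lower bound) with Theorem \ref{the_1} of Hedetniemi, Jacobs and Trevisan (supplying the upper bound $m_{T}[0,1)\le\gamma(T)$), after first establishing that $\gamma(T)=(d+1)/3$ for every $T\in\Gamma(n,d)$. Thus the only genuine piece of work is the evaluation of the domination number, i.e.\ verifying property (P2); everything else is invocation of results already available.

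To evaluate $\gamma(T)$, write $T=H_{d}(n_{1},\dots,n_{(d+1)/3})$ with diameter path $P_{d+1}=v_{1}v_{2}\cdots v_{d+1}$, where by construction the attached pendants hang only at the vertices $v_{3i-1}$, $1\le i\le (d+1)/3$. I claim $S=\{v_{3i-1}:1\le i\le (d+1)/3\}$ is a dominating set. Indeed $v_{3i-1}$ dominates the three consecutive path vertices $v_{3i-2},v_{3i-1},v_{3i}$ together with all pendants attached to it; letting $i$ run from $1$ to $(d+1)/3$, these triples tile $v_{1},\dots,v_{d+1}$ exactly (here $d\equiv 2\pmod 3$ is used, so that $v_{d}=v_{3\cdot(d+1)/3-1}$ is the last centre and $v_{d+1}$ lies in the final triple), while every pendant is adjacent to some $v_{3i-1}\in S$. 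Hence $\gamma(T)\le|S|=(d+1)/3$. For the reverse inequality I note that the diameter of $T$ equals $d$ (property (P1): the path $P_{d+1}$ realises distance $d$, and no pair of vertices — in particular no two extremal pendants, which lie at mutual distance at most $d$ — is farther apart than $d$), so Theorem \ref{the_3} gives $\gamma(T)\ge(d+1)/3$. Therefore $\gamma(T)=(d+1)/3$.

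With this in hand the conclusion is immediate: since $T$ has diameter $d$, Theorem \ref{the_2} gives $m_{T}[0,1)\ge(d+1)/3$, while Theorem \ref{the_1} applied to $T$ gives $m_{T}[0,1)\le\gamma(T)=(d+1)/3$. As $(d+1)/3$ is an integer and $m_{T}[0,1)$ is a count, the two bounds coincide and force
$$m_{T}[0,1)=\frac{d+1}{3}.$$

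I do not expect a serious obstacle: once $\gamma(T)=(d+1)/3$ is secured, the result is a one-line squeeze, and verifying the domination number is routine. The only point demanding a little care is that $S$ dominates $T$ \emph{precisely because} $d\equiv 2\pmod 3$, the congruence built into the definition of $\Gamma(n,d)$, which guarantees the final centre $v_{d}$ sits three steps from the path-end so that the triples tile without overshoot. If one instead wanted a proof independent of Theorem \ref{the_1}, the natural but more laborious alternative would be to run the Jacobs--Trevisan diagonalization of $L(T)-I$ along the tree and count the negative diagonal entries directly; that route would also recover property (P5) (multiplicity $n-d$ of the eigenvalue $1$) as a byproduct, but it is strictly more work than the squeeze above.
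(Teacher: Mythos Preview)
Your proof is correct and takes a genuinely different route from the paper's. The paper argues by induction on $n$: the base case is $T\cong P_{d+1}$, and for the inductive step one deletes a pendant vertex to obtain $T'\in\Gamma(n-1,d)$, invokes property (P5) --- that $1$ is a Laplacian eigenvalue of $T'$ and of $T$ with multiplicities $n-d-1$ and $n-d$ respectively --- and then uses the interlacing of Proposition \ref{pro_1} to locate exactly where the block of eigenvalues equal to $1$ sits, forcing $\mu_{k+1}(T)=1$ and hence $m_T[0,1)=k$. Your squeeze via Theorems \ref{the_1} and \ref{the_2}, after verifying $\gamma(T)=(d+1)/3$, is shorter and more conceptual; in fact it is precisely the converse half of the paper's own proof of Theorem \ref{the_4}, so with your argument Lemma \ref{lem_2} becomes an immediate consequence of that direction rather than an ingredient fed into it. The paper's inductive proof, by contrast, avoids invoking the Hedetniemi--Jacobs--Trevisan inequality and yields finer information about where the eigenvalue $1$ occurs in the ordered spectrum, at the cost of relying on the multiplicity result (P5) from \cite{Xue2020}.
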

\begin{proof}
   According to the definition of $\Gamma(n,d)$, we may assume that $d=3k-1$ and 
   $$T\cong H_{d}(n_{1},n_{2},\ldots,n_{k}),$$ 
   where $k\geq 1$ and $n_{i}\geq 0$ for $1\leq i\leq k$.
   We prove this lemma by induction on $n\geq d+1$. If $n=d+1$, then $T\cong P_{d+1}$, and hence $m_{T}[0,1)=(d+1)/3$.

   Suppose now that $n\geq d+2$. Then $\sum_{i=1}^{k}n_{i}=n-(d+1)\geq 1$. This implies that $n_{s}\geq 1$ for some $1\leq s\leq k$. 
   Let us consider the tree $$T'\cong H_{d}(n'_{1},n'_{2},\ldots,n'_{k})$$
   where $n'_{i}=n_{i}-1$ if $i=s$, and $n'_{i}=n_{i}$ otherwise. Then $T'$ is obtained from $T$ by deleting a pendant vertex.
   Clearly, $T'\in \Gamma(n-1,d)$. By the induction hypothesis, 
   we have $m_{T'}[0,1)=(d+1)/3=k$. This implies that $\mu_{k}(T')<1$ and $\mu_{k+1}(T')\geq 1$.
   The property (P5) shows that $T'$ contains 1 as its Laplacian eigenvalue with multiplicitity $n-1-d\geq 1$. Set $q=n-d-1$.
   It follows that
   \begin{eqnarray}\label{eq6}
      \mu_{k}(T')<1, \mu_{k+1}(T')=\cdots=\mu_{k+q}(T')=1,\mu_{k+q+1}(T')>1.
   \end{eqnarray}
   Assume that the tree $T^{*}$ is isomorphic to the union of $T'$ and an isolated vertex.
   One can see that the Laplacian eigenvalues of $T^{*}$ consist of the Laplacian eigenvalues of $T'$ and an additional 0.
   According to (\ref{eq6}), we obtain that
   \begin{eqnarray}\label{eq7}
      \mu_{k+1}(T^{*})<1, \mu_{k+2}(T^{*})=\cdots=\mu_{k+q+1}(T^{*})=1,\mu_{k+q+2}(T^{*})>1.
   \end{eqnarray}
   Note also that the tree $T^{*}$ can be obtianed from $T$ by deleting a pendant edge. It follows from Proposition \ref{pro_1} that
   \begin{eqnarray}\label{eq8}
      \mu_{k}(T)\leq \mu_{k+1}(T^{*})\leq \mu_{k+1}(T)\leq
      \cdots\leq \mu_{k+q+1}(T^{*})\leq \mu_{k+q+1}(T)\leq \mu_{k+q+2}(T^{*})\leq \mu_{k+q+2}(T).
   \end{eqnarray}
   Combining (\ref{eq7}) and (\ref{eq8}), we obtain that 
   $$\mu_{k}(T)<1, \mu_{k+1}(T)\leq 1, \mu_{k+2}(T)=\cdots=\mu_{k+q}(T)=1, \mu_{k+q+1}(T)\geq 1, \mu_{k+q+2}(T)>1.$$
   According to the property (P5), 1 is the Laplacian eigenvalue of $T$ with multiplicitity $n-d=q+1$.
   This deduces that $\mu_{k+1}(T)=1$, and so $m_{T}[0,1)=k=(d+1)/3$.
   Thus we complete the proof.
\end{proof}

The above lemma shows that the trees in $\Gamma(n,d)$ satisfy $m_{T}[0,1)=(d+1)/3$. 
Next, we show that only such trees satisfy the equality. Before doing so, we introduce a tridiagonal matrix.
Let $M_{n}$ be a tridiagonal matrix of order $n$ as follows:
\begin{eqnarray*}
   M_{n}=\begin{bmatrix}
      1 & -1 \\
      -1 & 1 & -1 \\
      & -1 & \ddots & \ddots \\
      & & \ddots & \ddots  & -1 \\
      & & & -1 & 0
   \end{bmatrix}.
\end{eqnarray*}
The determinant of $M_{n}$ satisfies the following recurrence relation
\begin{eqnarray*}
   |M_{n}|=|M_{n-1}|-|M_{n-2}|.
\end{eqnarray*}
By solving this recurrence relation, one can obtain that
\begin{eqnarray}\label{eq10}
   |M_{n}|=\left\{
   \begin{aligned}
      0 &, &~\text{if}~n\equiv 1~(\text{mod}~3),\\
      -1 &,&~\text{if}~n\equiv 2~(\text{mod}~6)~\text{or}~n\equiv 3~(\text{mod}~6),  \\
      1 &, & ~\text{if}~n\equiv 0~(\text{mod}~6)~\text{or}~n\equiv 5~(\text{mod}~6).
   \end{aligned}
   \right.
\end{eqnarray}

\begin{theorem}\label{the_5}
   Let $T$ be a tree on $n$ vertices with diameter $d$. Then $m_{T}[0,1)=(d+1)/3$ if and only if $T\in \Gamma(n,d)$.
\end{theorem}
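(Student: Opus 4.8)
The plan is to prove Theorem~\ref{the_5} by establishing the two directions separately. The ``if'' direction is already Lemma~\ref{lem_2}, so the entire burden falls on the ``only if'' direction: assuming $m_{T}[0,1)=(d+1)/3$, I must show that $T$ must be one of the trees $H_{d}(n_{1},\ldots,n_{(d+1)/3})$. The first step is to extract structural consequences from the equality. Combining Theorem~\ref{the_2} (the lower bound $m_{T}[0,1)\geq(d+1)/3$) with the standing assumption forces $(d+1)/3$ to be an integer, hence $d\equiv 2~(\mathrm{mod}~3)$, which is exactly the congruence condition appearing in the definition of $\Gamma(n,d)$. So the congruence on $d$ is free; what remains is to pin down the shape of the tree.

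The core idea is to run the same pendant-deletion machinery used in the proof of Theorem~\ref{the_2}, but now tracked as an equality rather than an inequality. Fix a diametral path $P_{d+1}=v_{1}\cdots v_{d+1}$ and peel off the $m=n-d-1$ vertices outside $P$ one at a time, giving $T=T_{0},T_{1},\ldots,T_{m}=P_{d+1}$. By Lemma~\ref{lem_1}, $m_{T_{i}}[0,1)\geq m_{T_{i+1}}[0,1)$ along this chain, and since the two ends both equal $(d+1)/3$, the count must be \emph{constant} throughout: $m_{T_{i}}[0,1)=(d+1)/3$ for every $i$. This rigidity is the engine of the proof. I would then analyze exactly when attaching a single pendant vertex to a tree $G$ with $m_{G}[0,1)=(d+1)/3$ leaves the count unchanged. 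Using the interlacing relations~(\ref{eq7})--(\ref{eq8}) from the proof of Lemma~\ref{lem_2}, the count is preserved precisely when the new eigenvalue that the pendant attachment ``pushes below~1'' is compensated by the eigenvalue~$1$ structure; concretely, this should force the attachment vertex to be a position $v_{3i-1}$ and should require $1$ to remain a Laplacian eigenvalue of high multiplicity at each stage. The tridiagonal matrix $M_{n}$ with its determinant formula~(\ref{eq10}) is the tool for detecting when $1$ is an eigenvalue: since $L(T)-I$ is the relevant matrix whose kernel governs the multiplicity of the eigenvalue $1$, and $M_{n}$ is exactly $L(P_{n})-I$ restricted to a path segment, the value $|M_{n}|=0$ iff $n\equiv 1~(\mathrm{mod}~3)$ encodes which path lengths make $1$ an eigenvalue. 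I expect to use this determinant bookkeeping to show that a pendant may only be grafted at a vertex whose two path-distances to the ends of the branch are each $\equiv 1~(\mathrm{mod}~3)$, i.e.\ exactly the $v_{3i-1}$ positions.

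The hardest step, I anticipate, is the converse bookkeeping: ruling out \emph{every} alternative attachment pattern. It is not enough to show that attaching at a $v_{3i-1}$ preserves the count; I must show that attaching anywhere else, or attaching a path of length $\geq 2$ rather than a single pendant, strictly \emph{increases} $m_{T}[0,1)$ and hence is forbidden. This requires a careful case analysis of how the pendant-deletion interlacing~(\ref{eq8}) behaves when the eigenvalue~$1$ is \emph{not} present with the requisite multiplicity in the intermediate tree, showing that in those cases the inequality $\mu_{k}(T)<1$ is accompanied by $\mu_{k+1}(T)<1$ as well. A clean way to organize this is to argue contrapositively via property (P5) and the characterization in \cite{Xue2020}: if $T$ has $1$ as a Laplacian eigenvalue of multiplicity $n-d$ and diameter $d$ with $d\equiv 2~(\mathrm{mod}~3)$, then the results of \cite{Xue2020} combined with the distance condition (P4) should already force $T\in\Gamma(n,d)$, so the remaining task reduces to proving that $m_{T}[0,1)=(d+1)/3$ \emph{implies} $1$ has multiplicity $n-d$. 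That implication is where the $M_{n}$ determinant computation does the decisive work, by quantifying the eigenvalue-$1$ eigenspace in terms of the branch structure and showing any deficiency would create an extra eigenvalue strictly below~$1$.
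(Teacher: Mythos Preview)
Your plan departs substantially from the paper's argument, and the divergence is where the gap lies. The paper does \emph{not} track the entire pendant-deletion chain $T=T_{0},\ldots,T_{m}=P_{d+1}$ and argue rigidity at every step. Instead it argues contrapositively and constructs a single small witness subtree: if $T\notin\Gamma(n,d)$ with $d=3k-1$, then either some off-path vertex $u$ is attached to a path vertex $v_{s}$ with $s\not\equiv 2~(\mathrm{mod}~3)$, or some off-path vertex lies at distance~$2$ from the path. In the first case the paper takes $T'=P_{d+1}+u$, shows $\mu_{k+1}(T')\leq 1$ by interlacing with $P_{3k}\cup K_{1}$, and then computes $|L(T')-I|=-|M_{s-1}|\cdot|M_{3k-s}|\neq 0$ directly from~(\ref{eq10}) to rule out the eigenvalue~$1$, forcing $\mu_{k+1}(T')<1$. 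In the second case it takes $T'=P_{d+1}+\{u,w\}$, deletes one path edge to split $T'$ into two paths whose eigenvalue counts (via Theorem~\ref{the_2}) already sum to at least $k+2$, and interlaces back. In both cases a single application of Lemma~\ref{lem_1} (iterated) transports $m_{T'}[0,1)\geq k+1$ to $T$.

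Your rigidity scheme, by contrast, requires analyzing pendant attachment to an \emph{arbitrary} intermediate tree $T_{i}$, and you propose to reuse the interlacing relations~(\ref{eq7})--(\ref{eq8}). But those relations hinge on knowing in advance that $1$ is an eigenvalue of $T_{i}$ with multiplicity exactly $|V(T_{i})|-d$; that is precisely property~(P5), which is available for trees already in $\Gamma$ but not for a generic $T_{i}$. So the inductive step is circular unless you simultaneously prove the multiplicity statement at each stage. Your fallback---reduce everything to ``$m_{T}[0,1)=(d+1)/3\Rightarrow$ mult$(1)=n-d$'' and then invoke \cite{Xue2020}---is left unproven, and even if established it yields only $\gamma=(d+1)/3$; you would still need a structural theorem identifying such trees with $\Gamma(n,d)$, which you are assuming rather than proving. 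The paper's witness-subtree argument avoids all of this by never needing to know the eigenvalue-$1$ multiplicity of any intermediate tree: the determinant computation is done once, on a tree with at most $d+3$ vertices, where $L(T')-I$ is explicit.
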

\begin{proof}
   Clearly, it follows from Theorem \ref{the_2} that $m_{T}[0,1)\geq (d+1)/3$.
   By Lemma \ref{lem_2}, it suffices to show that $m_{T}[0,1)>(d+1)/3$ if $T\notin \Gamma(n,d)$.
   If $d\not\equiv 2~(\text{mod}~3)$, then $\mu_{T}[0,1)>(d+1)/3$ as $(d+1)/3$ is not an integer.
   Suppose now that $d\equiv 2~(\text{mod}~3)$. Let $d=3k-1$ with $k\geq 1$.
   Assume that $P=v_{1}v_{2}\cdots v_{3k}$ be a diameter-path in $T$.
   Let $B_{1}=\{v_{i}\in V(P):i\equiv 2~(\text{mod}~3)\}$ and $B_{2}=V(P)\backslash B_{1}$.
   Denote by $A$ the set of vertices not in the path $P$, that is, $A=V(T)\backslash V(P)$.
   Let $$A_{1}=\{u\in A: u\sim v_{i}~\text{for some}~v_{i}\in B_{1}\}$$
   and $$A_{2}=\{u\in A: u\sim v_{i}~\text{for some}~v_{i}\in B_{2}\}.$$
   Set $A_{3}=A\backslash(A_{1}\cup A_{2})$. Note that $A_{1}$, $A_{2}$ and $A_{3}$ are disjoint sets, and $A=A_{1}\cup A_{2}\cup A_{3}$.
   If $A_{2}=\emptyset$ and $A_{3}=\emptyset$, then clearly $T\in \Gamma(n,d)$.
   So we may divide the proof into the following cases.

   \medskip

   \noindent{\bf Case 1}: $A_{2}\neq \emptyset$.

   \medskip

   Choose a vertex $u$ in $A_{2}$. Thus the vertex $u$ is adjacent to a vertex $v_{s}\in V(P)$, where $3\leq s\leq 3k-2$ and $s\not\equiv 2~(\text{mod}~3)$.
   Let $T'$ be the subgraph of $T$ induced by $V(P)\cup \{u\}$. Obviously, $uv_{s}$ is a pendant edge in $T'$.
   Let $H\cong T'-uv_{s}$. One can see that $H\cong P_{3k}\cup K_{1}$. Recall that $\mu_{k+1}(P_{3k})=1$.
   Since the Laplacian eigenvalues of $H$ consist of the Laplacian eigenvalues of $P_{3k}$ and an additional 0,
   it follows that $\mu_{k+2}(H)=\mu_{k+1}(P_{3k})=1$.
   By Proposition \ref{pro_1}, we obtain that $\mu_{k+1}(T')\leq \mu_{k+2}(H)$, and so $\mu_{k+1}(T')\leq 1$.
   We will show that 1 can not be a Laplacian eigenvalue of $T'$. If not, suppose that $1$ is a Laplacian eigenvalue of $T'$ with an eigenvector $\mathtt{x}$.
   Thus we have $L(T')\mathtt{x}=\mathtt{x}$, which implies that
   \begin{eqnarray}\label{eq9}
      (L(T')-I)\mathtt{x}=\mathbf{0}.
   \end{eqnarray}
   One can see that $L(T')-I$ can be written as:
   \begin{eqnarray}
      L(T')-I=\left[
\begin{array}{c|c|c|c} 
    M_{s-1}  &  \mathbf{0} & \begin{matrix}-1\\\mathbf{0}\end{matrix}   & \mathbf{0}\\
\hline
\mathbf{0}   & M_{3k-s} &\begin{matrix}-1\\\mathbf{0}\end{matrix}   &  \mathbf{0} \\
        \hline
        \begin{matrix}-1&\mathbf{0}\end{matrix}&   \begin{matrix}-1&\mathbf{0}\end{matrix} &   2     &  -1  \\
        \hline
        \mathbf{0}    &    \mathbf{0}  & -1  &  0  
\end{array}
\right],
\end{eqnarray}
where $M_{s-1}$ and $M_{3k-s}$ are tridiagonal matrices defined above, and the boldface $\mathbf{0}$ means a zero matrix of appropriate order.
By the elementary transformation of the matrix, we have
\begin{eqnarray}
   |L(T')-I|=\left|
\begin{array}{c|c|c|c} 
 M_{s-1}  &  \mathbf{0} & \begin{matrix}-1\\\mathbf{0}\end{matrix}   & \mathbf{0}\\
\hline
\mathbf{0}   & M_{3k-s} &\begin{matrix}-1\\\mathbf{0}\end{matrix}   &  \mathbf{0} \\
     \hline
     \mathbf{0}&   \mathbf{0} &   2     &  -1  \\
     \hline
     \mathbf{0}    &    \mathbf{0}  & 0  &  -\frac{1}{2}  
\end{array}
\right|=-|M_{s-1}|\cdot|M_{3k-s}|.
\end{eqnarray}
Since $s\not\equiv 2~(\text{mod}~3)$, we have $s-1\not\equiv 1~(\text{mod}~3)$ and $3k-s\not\equiv 1~(\text{mod}~3)$. It follows from (\ref{eq10}) that 
$$|L(T')-I|=-|M_{s-1}|\cdot|M_{3k-s}|=1\neq 0.$$
Combining (\ref{eq9}), it follows that $\mathtt{x}$ is a zero vector, a contradiction.
Hence 1 can not be a Laplacian eigenvalue of $T'$. Therefore, $\mu_{k+1}(T')<1$, that is, $m_{T'}[0,1)\geq k+1$.

It is easy to see that the tree $T'$ can be obtained from $T$ by deleting successively $n-(3k+1)$ pendant vertices.
By applying repeatedly Lemma \ref{lem_1}, we obtain that $m_{T}[0,1)\geq k+1=1+(d+1)/3>(d+1)/3$, as required.

\medskip

\noindent{\bf Case 2}: $A_{2}=\emptyset$ and $A_{3}\neq \emptyset$.

\medskip

In this case, $A_{3}$ must have a vertex which is adjacent to some vertex in $A_{1}$.
Let $w\in A_{3}$ and $u\in A_{1}$ be two vertices such that $w\sim u$ and $u\sim v_{s}$, where $5\leq s\leq 3k-4$ and $s\equiv 2~(\text{mod}~3)$.
Consider the subtree $T'$ of $T$ induced by $V(P)\cup\{w,u\}$. It is easy to see that $T'-v_{s-1}v_{s}\cong P_{s-1}\cup P_{3k-s+3}$.
By Theorem \ref{the_2}, we have 
$$m_{P_{s-1}}[0,1)\geq \frac{s-1}{3}~~~\text{and}~~~m_{P_{3k-s+3}}[0,1)\geq \frac{3k-s+3}{3}.$$
Since $s\equiv 2~(\text{mod}~3)$, we may assume that $s=3t+2$ for $1\leq t\leq k-2$. It follows that 
$$m_{P_{s-1}}[0,1)\geq t+1~~~\text{and}~~~m_{P_{3k-s+3}}[0,1)\geq k-t+1,$$
and hence $m_{P_{s-1}}[0,1)+m_{P_{3k-s+3}}[0,1)\geq k+2$. This implies that $T'-v_{s-1}v_{s}$ contains at least $k+2$ Laplacian eigenvalues less than 1.
Therefore,
$$\mu_{k+2}(T'-v_{s-1}v_{s})<1.$$
According to Proposition \ref{pro_1}, we obtain that 
$$\mu_{k+1}(T')\leq \mu_{k+2}(T'-v_{s-1}v_{s})<1,$$
which implies that $m_{T'}[0,1)\geq k+1$. 
Note that the tree $T'$ can be obtained from $T$ by deleting successively $n-(3k+2)$ pendant vertices.
By repeated application of Lemma \ref{lem_1}, it follows that $m_{T}[0,1)\geq k+1=1+(d+1)/3>(d+1)/3$, which completes the proof.
\end{proof}

Finally, we give the proof of Theorem \ref{the_4}.

\begin{proof}[Proof of Theorem \ref{the_4}]
If $m_{T}[0,1)=(d+1)/3$, then it follows from Theorem \ref{the_5} that $T\in \Gamma(n,d)$. According to the property (P2),
$\gamma=(d+1)/3$.

Conversely, suppose that the domination number of $T$ is equal to $(d+1)/3$. By Theorem \ref{the_1}, we have $m_{T}[0,1)\leq (d+1)/3$.
On the other hand, Theorem \ref{the_2} implies that $m_{T}[0,1)\geq (d+1)/3$, and hence $m_{T}[0,1)=(d+1)/3$.
\end{proof}

\begin{figure}[tbp]
  \centering
  \includegraphics[scale=0.9]{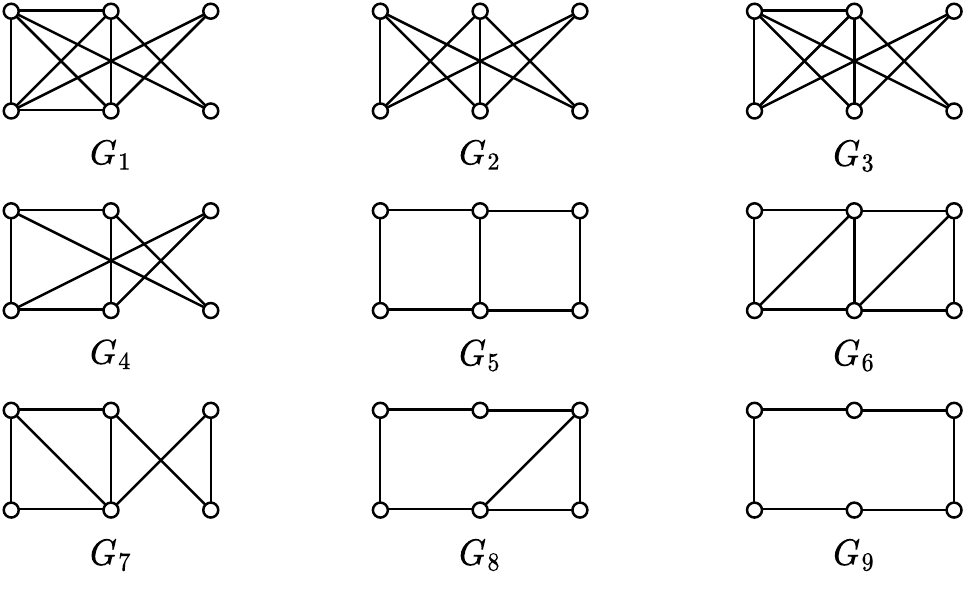}
  \vspace{-0.3cm}
  \caption{Minimum counterexamples.}
  \label{fig_2}
  \end{figure}

As mentioned above, any tree $T$ in $\Gamma(n,d)$ has domination number $\gamma=(d+1)/3$. 
This implies that these trees attain the lower bound for domination number in Theorem \ref{the_3}.
Moreover, the domination number is equal to $m_{T}[0,1)$, then $\Gamma(n,d)$ is also a set of trees which satisfy the equality in Theorem \ref{the_1}.

\section{Observation and discussion}

Given a tree $T$, Theorem \ref{the_2} shows that $(d+1)/3$ is the lower bound for $m_{T}[0,1)$, which also established 
the relation between the Laplacian eigenvalue distribution and the diameter. 
Since $m_{T}[0,1)$ means the number of its Laplacian eigenvalues less than 1, it must be an integer.
Note also that $(d+1)/3$ is an integer only if $d\equiv 2~(\text{mod}~3)$. Then Theorem \ref{the_2} can be improved slightly as follows.

   \begin{table}[tbp]
        \begin{center} \caption{Laplacian eigenvalues of $G_{1}-G_{9}$.}\label{tab_2}
          \begin{tabular}{ccccccc}
          \toprule
          & $\mu_{1}$ & $\mu_{2}$ & $\mu_{3}$ & $\mu_{4}$ & $\mu_{5}$ & $\mu_{6}$ \\
          \midrule
          $G_{1}$& 5.562 & 5.000 & 5.000 & 3.000 & 1.438 & 0\\
          $G_{2}$& 5.562 & 3.000 & 3.000 & 3.000 & 1.438 & 0\\
          $G_{3}$& 5.562 & 5.000 & 3.000 & 3.000 & 1.438 & 0\\
          $G_{4}$& 5.000 & 4.000 & 3.000 & 3.000 & 1.000 & 0\\
          $G_{5}$& 5.000 & 3.000 & 3.000 & 2.000 & 1.000 & 0\\
          $G_{6}$& 5.543 & 5.000 & 3.471 & 3.000 & 1.186 & 0\\
          $G_{7}$& 5.278 & 4.317 & 3.000 & 2.295 & 1.109 & 0\\
          $G_{8}$& 4.414 & 4.000 & 3.000 & 1.586 & 1.000 & 0\\
          $G_{9}$& 4.000 & 3.000 & 3.000 & 1.000 & 1.000 & 0\\
      \bottomrule
          \end{tabular}
        \end{center}
      \end{table}

\begin{theorem}\label{the_6}
   If $T$ is a tree with diameter $d$, then $m_{T}[0,1)\geq \lceil (d+1)/3\rceil$.
\end{theorem}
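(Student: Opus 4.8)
The plan is to obtain Theorem~\ref{the_6} as an immediate consequence of Theorem~\ref{the_2} together with the integrality of the quantity $m_{T}[0,1)$. The key (and only) observation is that $m_{T}[0,1)$ is by definition the number of Laplacian eigenvalues lying in the interval $[0,1)$, and is therefore a non-negative integer. No new spectral or combinatorial machinery is required beyond what has already been developed.

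First I would invoke Theorem~\ref{the_2} to record the real-valued inequality $m_{T}[0,1)\geq (d+1)/3$. Since the left-hand side is an integer, I would then apply the elementary fact that any integer $x$ satisfying $x\geq y$ for a real number $y$ must also satisfy $x\geq \lceil y\rceil$, because $\lceil y\rceil$ is by definition the least integer not smaller than $y$. Taking $x=m_{T}[0,1)$ and $y=(d+1)/3$ gives $m_{T}[0,1)\geq \lceil (d+1)/3\rceil$, as required.

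It is worth clarifying when the refinement is genuine. If $d\equiv 2~(\text{mod}~3)$, then $(d+1)/3$ is already an integer and $\lceil (d+1)/3\rceil=(d+1)/3$, so Theorem~\ref{the_6} coincides with Theorem~\ref{the_2}. Otherwise $(d+1)/3$ is not an integer, and the ceiling strictly increases the bound by rounding up to the next integer. Thus the improvement costs nothing beyond the remark that one cannot have a fractional number of eigenvalues.

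There is essentially no obstacle to surmount here: the entire content lies in combining the already-established Theorem~\ref{the_2} with the integrality of eigenvalue counts. The only point requiring (minimal) care is to state the rounding step with $\lceil \cdot\rceil$ rather than $\lfloor \cdot\rfloor$, so that the inequality is preserved in the correct direction.
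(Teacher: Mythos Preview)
Your proposal is correct and matches the paper's own argument essentially verbatim: the paper also derives Theorem~\ref{the_6} simply by noting that $m_{T}[0,1)$ is an integer while $(d+1)/3$ is an integer only when $d\equiv 2\pmod 3$, so the bound of Theorem~\ref{the_2} can be rounded up to $\lceil (d+1)/3\rceil$.
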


Obviously, it is equivalent to saying that a tree contains at least $\lceil (d+1)/3\rceil$ Laplacian eigenvalues less than one.
Naturally, one may ask whether a general connected graph has at least $\lceil (d+1)/3\rceil$ Laplacian eigenvalues less than one.
Let $G$ be a connected graph on $n$ vertices with diameter $d$. By computing directly, we obtain that 
the graph $G$ also satisfies the relation $m_{G}[0,1)\geq \lceil (d+1)/3\rceil$ if $G$ has at most $5$ vertices. 
However, in general, it is false for connected graphs with $n\geq 6$ vertices. 
By searching for small graphs, the minimum counterexample has 6 vertices.
Figure \ref{fig_2} exhibits all connected graph on 6 vertices which do not satisfy the inequality $m_{G}[0,1)\geq \lceil (d+1)/3\rceil$.
The Laplacian eigenvalues of these graphs are presented in Table \ref{tab_2}. One can see that, for $1\leq i\leq 9$, $G_{i}$ contains only one Laplacian eigenvalue less than 1, 
that is, $m_{G_{i}}[0,1)=1$. However, the diameter of $G_{i}$ is $d=3$, which implies that $m_{G_{i}}[0,1)<\lceil (d+1)/3\rceil$.

We remark that the lower bound in Theorem \ref{the_6} is sharp. If $d\equiv 2~(\text{mod}~3)$, Theorem \ref{the_5} presents all trees which attain the bound with equality.
A tree is called \emph{double starlike} if it has exactly two vertices of degree greater than two.
Let $T(d,p,q)$ be a tree obtained from $P_{d-1}$ by adding $p$ and $q$ pendant vertices on the end vertices of $P_{d-1}$ respectively, as shown in Figure \ref{fig_3}.
Obviously, $T(d,p,q)$ is double starlike with diameter $d$. If $p=q$, Liu, Zhang and Lu \cite{Liu2009} proved that the double starlike tree $T(d,p,q)$ is determined by its
Laplacian spectrum. In the following, we will show how many Laplacian eigenvalues of the double starlike tree $T(d,p,q)$ are less than 1.

\begin{proposition}
   The double starlike tree $T(d,p,q)$ has exactly $\lceil (d+1)/3\rceil$ Laplacian eigenvalues less than 1.
\end{proposition}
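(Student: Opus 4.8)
The plan is to sandwich $m_{T(d,p,q)}[0,1)$ between two bounds that both equal $\lceil (d+1)/3\rceil$, using results already available in the excerpt. The lower bound is immediate: since $T(d,p,q)$ is a tree of diameter $d$, Theorem \ref{the_6} gives
\[
m_{T(d,p,q)}[0,1)\geq \left\lceil \frac{d+1}{3}\right\rceil .
\]
For the matching upper bound I would invoke Theorem \ref{the_1}, which yields $m_{T(d,p,q)}[0,1)\leq \gamma(T(d,p,q))$. Thus the whole proposition reduces to the purely combinatorial claim that $\gamma(T(d,p,q))=\lceil (d+1)/3\rceil$.

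To compute this domination number, note first that the lower bound $\gamma(T(d,p,q))\geq \lceil (d+1)/3\rceil$ is automatic: Theorem \ref{the_3} gives $\gamma\geq (d+1)/3$, and since $\gamma$ is an integer this forces $\gamma\geq\lceil (d+1)/3\rceil$. The substance is therefore to exhibit a dominating set of size exactly $\lceil (d+1)/3\rceil$. Writing the central path as $P_{d-1}=u_1u_2\cdots u_{d-1}$, with the $p$ pendants attached at $u_1$ and the $q$ pendants at $u_{d-1}$, I would put $u_1$ and $u_{d-1}$ into the set $S$; these two vertices dominate all pendants together with $u_1,u_2,u_{d-2},u_{d-1}$, leaving only the internal subpath on $u_3,u_4,\ldots,u_{d-3}$ undominated. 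That subpath has $d-5$ vertices, so it can be dominated by a further $\lceil (d-5)/3\rceil$ vertices (the standard fact $\gamma(P_m)=\lceil m/3\rceil$, applied to the induced path). Since $\lceil (d-5)/3\rceil=\lceil (d+1)/3\rceil-2$, the set $S$ has exactly $2+\lceil (d-5)/3\rceil=\lceil (d+1)/3\rceil$ vertices, giving the required upper bound on $\gamma$. Combining everything yields
\[
\left\lceil \frac{d+1}{3}\right\rceil \leq m_{T(d,p,q)}[0,1)\leq \gamma(T(d,p,q))=\left\lceil \frac{d+1}{3}\right\rceil ,
\]
so all inequalities are equalities and the proposition follows.

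In this route the only delicate point is the bookkeeping for the small-diameter cases ($d=3,4,5$), where the internal subpath $u_3\cdots u_{d-3}$ is empty and one must check by hand that $\{u_1,u_{d-1}\}$ already dominates everything, matching $\lceil (d+1)/3\rceil=2$; here one also confirms $u_1\neq u_{d-1}$, i.e.\ $d\geq 3$, which is exactly the range in which $T(d,p,q)$ is genuinely double starlike (so that $p,q\geq 2$ force both centers to have degree greater than two).

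An alternative, self-contained but substantially more laborious route would avoid Theorem \ref{the_1} entirely: collapse the two groups of pendants via the equitable partition coming from the pendant-permuting automorphisms, peel off the eigenvalue $1$ with multiplicity $p+q-2$, and reduce to counting the negative eigenvalues of a symmetric tridiagonal matrix of order $d+1$ with diagonal $(1,p+1,2,\ldots,2,q+1,1)$ and end weights $\sqrt p,\sqrt q$, via a Sturm-sequence / leading-principal-minor argument in the spirit of the matrix $M_n$ and the determinant evaluation \eqref{eq10}. The hard part of that route is controlling how the two modified end entries perturb the sign pattern of the principal minors, which is precisely the difficulty the domination argument lets us sidestep.
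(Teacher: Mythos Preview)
Your proof is correct and takes a genuinely different route from the paper. The paper proves the upper bound $m_{T(d,p,q)}[0,1)\leq\lceil(d+1)/3\rceil$ by induction on $d$ on the spectral side: it deletes the edge $v_2v_3$ so that $T$ splits as $K_{1,p+1}\cup T(d-3,1,q)$, applies Proposition~\ref{pro_1} to get $m_T[0,1)\leq m_{K_{1,p+1}}[0,1)+m_{T(d-3,1,q)}[0,1)$, and then uses Lemma~\ref{lem_2} and the inductive hypothesis. You instead invoke Theorem~\ref{the_1} to pass to the combinatorial inequality $m_T[0,1)\leq\gamma(T)$ and then compute $\gamma(T(d,p,q))=\lceil(d+1)/3\rceil$ directly by exhibiting a dominating set. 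Your argument is shorter and ties the proposition neatly back into the paper's central theme (it even shows that $T(d,p,q)$ realizes equality in Theorem~\ref{the_1}); its cost is that it leans on Theorem~\ref{the_1}, a cited external result, whereas the paper's inductive proof uses only the elementary interlacing Proposition~\ref{pro_1} and Lemma~\ref{lem_2}. Your bookkeeping is fine: for $d\in\{3,4,5\}$ the set $\{u_1,u_{d-1}\}$ already dominates, matching $\lceil(d+1)/3\rceil=2$, and for $d\geq 6$ the identity $2+\lceil(d-5)/3\rceil=\lceil(d+1)/3\rceil$ closes the count. (The degenerate case $d=2$, which the paper includes as a base case even though $T(2,p,q)\cong K_{1,p+q}$ is not double starlike, is also covered by your method since $\gamma(K_{1,p+q})=1=\lceil 3/3\rceil$.)
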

\begin{proof}
   Let $T\cong T(d,p,q)$. Since the diameter of $T$ is $d$, Theorem \ref{the_6} implies that $m_{T}[0,1)\geq \lceil (d+1)/3\rceil$.
   It suffices to show that $m_{T}[0,1)\leq \lceil (d+1)/3\rceil$. This can be proved by induction on $d$.
   If $d=2$, then $T$ is isomorphic to the star $K_{1,p+q}$, and the result follows from Lemma \ref{lem_2} directly.
   Let $T^{*}\cong T-v_{1}v_{2}$. Clearly, there are two components in $T^{*}$. 
   If $d=3$ or 4, then $T^{*}\cong K_{1,p}\cup K_{1,s}$, where $s\in \{q,q+1\}$. 
   Lemma \ref{lem_2} shows that $m_{K_{1,p}}[0,1)=m_{K_{1,s}}[0,1)=1$. 
   It follows from Proposition \ref{pro_1} that 
   $$m_{T}[0,1)\leq m_{T^{*}}[0,1)=m_{K_{1,p}}[0,1)+m_{K_{1,s}}[0,1)=2=\left\lceil\frac{ d+1}{3}\right\rceil.$$

   Assume now that $d\geq 5$. Let $T'\cong T-v_{2}v_{3}$. Suppose that $T'\cong T_{1}\cup T_{2}$, where $T_{1}\cong K_{1,p+1}$
   and $T_{2}\cong T(d-3,1,q)$. According to Lemma \ref{lem_2}, it follows that $m_{T_{1}}[0,1)=1$. 
   Note that the diameter of $T_{2}$ is equal to $d-3$. By the induction hypothesis, we have $m_{T_{2}}[0,1)\leq \lceil (d-2)/3\rceil$. 
   By Proposition \ref{pro_1}, we obtain that 
   $$m_{T}[0,1)\leq m_{T'}[0,1)=m_{T_{1}}[0,1)+m_{T_{2}}[0,1)\leq 1+\left\lceil \frac{d-2}{3}\right\rceil \leq \left\lceil\frac{ d+1}{3}\right\rceil,$$
   as required.
\end{proof}

\begin{figure}[tbp]
  \centering
  \includegraphics[scale=0.9]{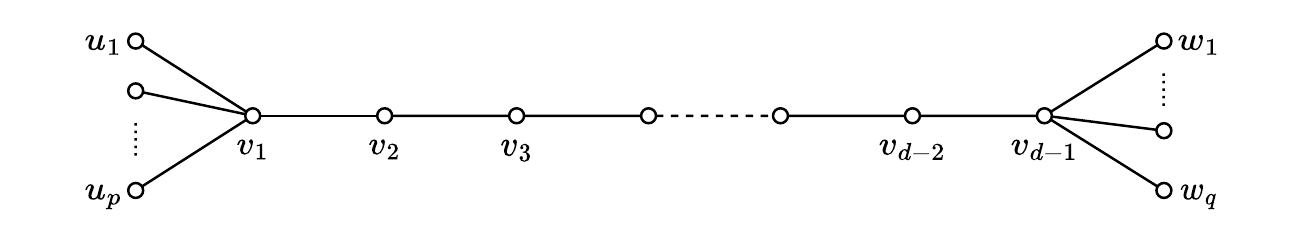}
  \caption{Double starlike tree $T(d,p,q)$.}
  \label{fig_3}
\end{figure}

Although we present an infinite family of trees which achieve the lower bound in Theorem \ref{the_6}, 
there are only a few trees with the property among all trees with given order. Let $\mathcal{T}_{n}$ be the set of all trees of order $n$.
Denote by $$\mathcal{T}_{n}^{*}=\{T\in \mathcal{T}_{n}: m_{T}[0,1)=\lceil (d+1)/3\rceil\}.$$
In other words, $\mathcal{T}_{n}^{*}$ means the set of all $n$-trees achieving the lower bound in Theorem \ref{the_6}. 
Using SageMath \cite{SageMath}, we check the Laplacian eigenvalue distribution of all trees with at most 20 vertices, see Table \ref{tab_3}.
Based on our observation, we propose the following conjecture for the ratio $\#\mathcal{T}_{n}^{*}/\#\mathcal{T}_{n}$.

\begin{conjecture}
   Let $\mathcal{T}_{n}$ and $\mathcal{T}_{n}^{*}$ be the set defined above. Then
   $$\lim_{n\to \infty}\frac{\#\mathcal{T}_{n}^{*}}{\#\mathcal{T}_{n}}=0.$$
\end{conjecture}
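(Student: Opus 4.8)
The plan is to reduce the conjecture to a comparison of exponential growth rates. The denominator is governed by Otter's classical asymptotic enumeration of (unlabelled) trees, $\#\mathcal{T}_{n}\sim C\,\alpha^{n}n^{-5/2}$ with $\alpha\approx 2.9557$, so it suffices to exhibit a constant $\beta<\alpha$ with $\#\mathcal{T}_{n}^{*}=O(\beta^{n})$; the ratio is then $O((\beta/\alpha)^{n}n^{5/2})\to 0$. I would organise the numerator according to the residue of $d$ modulo $3$, since the structure of the extremal trees is cleanest when $d\equiv 2~(\mathrm{mod}~3)$.

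For $d\equiv 2~(\mathrm{mod}~3)$, Theorem \ref{the_5} gives an exact description: the extremal trees are precisely the members of $\Gamma(n,d)$, that is, the caterpillars $H_{d}(n_{1},\dots,n_{k})$ with $k=(d+1)/3$ and $\sum_{i}n_{i}=n-d-1$. Each such tree is determined, up to reversing the spine, by the sequence $(n_{1},\dots,n_{k})$ of nonnegative integers, so for fixed $d=3k-1$ there are at most $\binom{n-2k-1}{k-1}$ of them. Summing over $k$ yields $\sum_{k\ge 1}\binom{n-2k-1}{k-1}$, a quantity whose terms obey a recurrence of the form $a_{n}=a_{n-1}+a_{n-3}$; its growth rate is the real root $\psi\approx 1.4656$ of $x^{3}=x^{2}+1$. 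Since $\psi<\alpha$, this part of $\mathcal{T}_{n}^{*}$ is already negligible.

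The residual cases $d\equiv 0,1~(\mathrm{mod}~3)$ are the crux, because Theorem \ref{the_5} does not characterise them and the path $P_{d+1}$ now carries a strict ``slack'' (its $\lceil(d+1)/3\rceil$-th Laplacian eigenvalue is strictly below $1$ rather than equal to $1$), so the extremal trees need not be the tidy caterpillars of $\Gamma(n,d)$. Here I would aim for an upper bound rather than an exact count. One route is to extend the interlacing and path-splitting argument of Theorem \ref{the_5} (Cases 1 and 2) to all residues, showing that any branch of depth $\ge 2$ or any pendant attached at a ``forbidden'' spine position forces an extra Laplacian eigenvalue below $1$; this would confine $\mathcal{T}_{n}^{*}$ to caterpillars, and the number of caterpillars on $n$ vertices is $\sim 2^{n-4}$, again with base $2<\alpha$. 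A softer route, avoiding a full characterisation, is to invoke the Grone--Merris--Sunder bound $q(T)\le m_{T}[0,1)=\lceil(d+1)/3\rceil$ quoted in the introduction: every tree in $\mathcal{T}_{n}^{*}$ has at most $\lceil(d+1)/3\rceil\le\lceil n/3\rceil$ support vertices, and counting trees with so few leaf-carrying vertices should again give an exponential bound with base below $\alpha$.

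The main obstacle is precisely this residual regime: controlling $\#\mathcal{T}_{n}^{*}$ when $d\not\equiv 2~(\mathrm{mod}~3)$ without an exact structure theorem, and in particular ruling out a positive-entropy family of extremal trees growing as fast as $\alpha^{n}$. I expect the interlacing extension to be delicate because the $1/3$-rounding in $\lceil(d+1)/3\rceil$ must be tracked carefully through each edge deletion, where the slack can absorb perturbations that would be forbidden in the $d\equiv 2$ case. As a conceptual alternative that bypasses enumeration altogether, one could appeal to the limiting empirical Laplacian spectral distribution of large random trees: a typical tree has a positive proportion $cn+o(n)$ of eigenvalues in $[0,1)$, whereas every tree in $\mathcal{T}_{n}^{*}$ has only $\lceil(d+1)/3\rceil=O(d)$ of them, so a large-deviation estimate for the event $m_{T}[0,1)=O(d)$ would force $\mathcal{T}_{n}^{*}$ to be a vanishing fraction, at the cost of heavier probabilistic machinery.
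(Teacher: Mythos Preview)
The statement you are addressing is presented in the paper as a \emph{conjecture}, not a theorem: the authors offer no proof, only the numerical evidence of Table~\ref{tab_3} and the remark ``Based on our observation, we propose the following conjecture.'' There is therefore no paper proof to compare your proposal against.

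As for the proposal itself, it is a plausible research outline rather than a proof, and you are candid about this. Your treatment of the residue $d\equiv 2~(\mathrm{mod}~3)$ via Theorem~\ref{the_5} is sound and the growth-rate comparison with Otter's constant is the right yardstick. The genuine gap, which you correctly flag, is the pair of residues $d\equiv 0,1~(\mathrm{mod}~3)$: nothing in the paper (nor in your sketch) confines $\mathcal{T}_{n}^{*}$ to caterpillars in those cases, and the Grone--Merris--Sunder route you suggest ($q(T)\le\lceil(d+1)/3\rceil\le\lceil n/3\rceil$) is too weak on its own, since the class of trees with at most $n/3$ quasi-pendant vertices is not obviously of sub-$\alpha^{n}$ size. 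Likewise, the probabilistic alternative would need a quantitative large-deviation bound for the event $\{m_{T}[0,1)\le\lceil(d+1)/3\rceil\}$ over uniformly random trees, and you would have to handle simultaneously the rare-diameter and rare-spectrum tails; neither estimate is supplied. In short: the $d\equiv 2$ case is settled, but the remaining two residues are exactly where a proof is still missing, and none of the three routes you sketch has been carried through.
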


Indeed, if the above conjecture is true, the lower bound in Theorem \ref{the_6} can be improved as follows.

\begin{conjecture}
   Almost all trees have at least $\lceil (d+1)/3\rceil+1$ Laplacian eigenvalues less than 1.
\end{conjecture}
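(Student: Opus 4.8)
The statement is an asymptotic-enumeration claim: under the uniform measure on the set $\mathcal{T}_n$ of non-isomorphic trees on $n$ vertices, the event $m_T[0,1)\ge \lceil(d+1)/3\rceil+1$ should hold with probability tending to $1$. By Theorem \ref{the_6} every tree satisfies $m_T[0,1)\ge\lceil(d+1)/3\rceil$, so this is exactly the assertion that the extremal set $\mathcal{T}_n^*$ is asymptotically negligible, i.e. the ratio conjecture stated just above. The plan is \emph{not} to analyse $\mathcal{T}_n^*$ structurally, but to exhibit a lower bound for $m_T[0,1)$ that, for a typical tree, already overshoots the target $\lceil(d+1)/3\rceil+1$ by a wide margin.

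The engine is the Grone--Merris--Sunder bound recalled in the introduction, $m_G[0,1)\ge q(G)$, where $q(G)$ counts the quasi-pendant vertices (those having a leaf neighbour) \cite{Grone1990}. Thus it suffices to prove that for almost all trees $q(T)>\lceil(d+1)/3\rceil$, and I would do this by comparing two well-studied parameters of a uniformly random tree: its number of quasi-pendant vertices, which I claim is linear in $n$, against its diameter, which I claim is sublinear in $n$.

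First I would establish $q(T)\ge cn$ with high probability for some absolute constant $c>0$. Writing $q(T)=\sum_v\mathbf 1\{v\text{ has a leaf neighbour}\}$ exhibits it as a local (additive) functional of the tree, so the fringe/local-limit theory of random unlabelled (Pólya) trees applies: $q(T)/n$ converges in probability to the limiting fringe-tree probability of possessing a leaf child, which is strictly positive, and concentration of additive functionals upgrades this to $q(T)\ge cn$ whp. Next I would use $d(T)=O(\sqrt n)$ whp, a consequence of the scaling-limit results stating that rescaled random Pólya trees converge to the continuum random tree; for the argument I only need the much weaker $d(T)=o(n)$ whp. Combining the two gives, whp, $\lceil(d+1)/3\rceil+1\le d/3+2=o(n)<cn\le q(T)\le m_T[0,1)$, which is the claim, the integer comparison being automatic once $n$ is large.

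The main obstacle is that both probabilistic inputs must be invoked in the \emph{unlabelled} regime, since $\mathcal{T}_n$ counts isomorphism classes and the corresponding (easier) facts for labelled Cayley trees do not transfer verbatim; fortunately both the additive-functional concentration and the continuum-random-tree scaling limit have been established for Pólya trees, so the difficulty is one of correctly importing this machinery rather than a genuine gap. I would also record that there is enormous slack here — the eigenvalue bound produces $\Theta(n)$ eigenvalues below $1$ against a threshold of order $\sqrt n$ — which both suggests that the same strategy proves the stronger statement $m_T[0,1)/(d+1)\to\infty$ for almost all trees, and indicates that a purely combinatorial route (showing that the caterpillar-like members of $\Gamma(n,d)$, and more generally trees with few quasi-pendant vertices, are exponentially rarer than $\#\mathcal{T}_n\sim C\alpha^n n^{-5/2}$) would already suffice if one preferred to avoid limit theorems.
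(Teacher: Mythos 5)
The first thing to note is that the paper does \emph{not} prove this statement: it appears as the final conjecture of Section~3, supported only by the enumeration data in Table~\ref{tab_3}, and the authors themselves observe that it is equivalent to the ratio conjecture immediately preceding it --- since Theorem~\ref{the_6} gives $m_{T}[0,1)\geq \lceil (d+1)/3\rceil$ for every tree, the statement holds for $T$ precisely when $T\notin \mathcal{T}_{n}^{*}$. So there is no in-paper argument to compare yours against; what you have written is a proof strategy for an open problem, and you correctly identify this equivalence at the outset.

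On its merits, your outline is sound, and I believe it would go through. The engine $m_{G}[0,1)\geq q(G)$ is exactly the Grone--Merris--Sunder bound quoted in the paper's introduction \cite{Grone1990}, and your two probabilistic inputs are both true and available in the literature: the diameter of a uniform random unlabelled tree is $\Theta(\sqrt{n})$ with high probability (via the continuum-random-tree scaling limit for P\'olya-type trees, and you only need the weaker $d=o(n)$), while linearly many vertices have a leaf neighbour with high probability. Two points need care in a full write-up. First, as you already flag, $\mathcal{T}_{n}$ consists of \emph{unrooted} unlabelled trees, whereas the fringe-tree and additive-functional machinery is most often stated for rooted P\'olya trees; you must either invoke the unrooted versions of these results or root at the centroid and verify the transfer --- this is bookkeeping, not a gap, but it is where an imprecise citation would sink the argument. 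Second, the event ``$v$ has a leaf neighbour'' is not literally a fringe functional in the rooted sense, since the leaf neighbour could be the parent of $v$; the clean fix is to lower-bound $q(T)$ by the number of vertices whose rooted fringe subtree is one fixed shape possessing a leaf child (a cherry, say), which is a bona fide fringe count with strictly positive limiting density, and that is all your inequality chain needs. With these repairs, $m_{T}[0,1)\geq q(T)\geq cn > d/3+2$ holds with high probability, which proves both of the paper's conjectures simultaneously and, as you remark, the much stronger assertion that $m_{T}[0,1)/\lceil (d+1)/3\rceil \to \infty$ in probability; your fallback suggestion --- showing combinatorially that trees with $o(n)$ quasi-pendant vertices are asymptotically negligible among the $\#\mathcal{T}_{n}\sim C\alpha^{n}n^{-5/2}$ unlabelled trees --- is also a plausible machinery-free alternative, though it would still require nontrivial unlabelled enumeration.
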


\begin{table}[tbp]  
    \begin{center}\caption{The ratio $\#\mathcal{T}_{n}^{*}/\#\mathcal{T}_{n}$ for $5\leq n\leq 20$.}\label{tab_3}
      \begin{tabular}{crrc}
      \toprule
      $n$ & $\#\mathcal{T}_{n}$ & $\#\mathcal{T}_{n}^{*}$ & $\#\mathcal{T}_{n}^{*}/\#\mathcal{T}_{n}$ \\
      \midrule
      5 & 3 & 2 & 0.666666667\\
      6 & 6 & 5 & 0.833333333\\
      7 & 11 & 7 & 0.636363636\\
      8 & 23 & 12 & 0.521739130\\
      9 & 47 & 20 & 0.425531915\\
      10 & 106 & 33 & 0.311320755\\
      11 & 235 & 52 & 0.221276596\\
      12 & 551 & 86 & 0.156079855\\
      13 & 1301 & 137 & 0.105303613\\
      14 & 3159 & 222 & 0.070275404\\
      15 & 7741 & 353 & 0.045601343\\
      16 & 19320 & 568 & 0.029399586\\
      17 & 48629 & 900 & 0.018507475\\
      18 & 123867 & 1433 & 0.011568860\\
      19 & 317955 & 2260 & 0.007107924\\
      20 & 823065 & 3574 & 0.004342306\\
  \bottomrule
      \end{tabular}
    \end{center}
  \end{table}

In a general way, this conjecture implies that a tree with large diameter always has many small Laplacian eigenvalues.
The distribution of small Laplacian eigenvalues was studied in \cite{Braga2013,Guo2007,Zhou2015}. 
In 2011, Trevisan, Carvalho, Vecchio and Vinagre \cite{Trevisan2011} conjectured that a tree with $n$ vertices has at least $\lceil n/2\rceil$
Laplacian eigenvalues less than $2-2/n$. Very recently, this conjecture was confirmed independently by Sin \cite{Sin2020} and Jacobs, Oliveira and Trevisan \cite{Jacobs2021}.

In summary, we obtain that for a tree, $\lceil (d+1)/3\rceil$ is the lower bound on the number of its Laplacian eigenvalues in interval $[0,1)$.
Moreover, we provide a double starlike tree attaining the lower bound. However, as mentioned at the beginning of the section, this lower bound 
is not applicable to general connected graphs. Inspired by these observations, we propose the following two problems:

\begin{itemize}
   \item For any connected graph $G$, find the suitable lower bound for $m_{G}[0,1)$.
   \item Characterize all trees which have exactly $\lceil (d+1)/3\rceil$ Laplacian eigenvalues less than 1.
\end{itemize}



\end{document}